\newtheorem{theorem}{Theorem}[section]
\newtheorem{corollary}[theorem]{Corollary}
\newtheorem{lemma}[theorem]{Lemma}
\newtheorem{proposition}[theorem]{Proposition}
\theoremstyle{definition}
\newtheorem{definition}[theorem]{Definition}
\newtheorem{example}[theorem]{Example}
\theoremstyle{remark}
\newenvironment{psmatrix}{\left(\begin{smallmatrix}}{\end{smallmatrix}\right)}
\newcommand{\BB}{\mathbb{B}}
\newcommand{\CC}{\mathbb{C}}
\newcommand{\DD}{\mathbb{D}}
\newcommand{\FF}{\mathbb{F}}
\newcommand{\RR}{\mathbb{R}}
\newcommand{\TT}{\mathbb{T}}
\newcommand{\ZZ}{\mathbb{Z}}
\newcommand{\EE}{\mathbb{E}}
\newcommand{\SO}{\operatorname{SO}}
\newcommand{\hG}{G}
\newcommand{\bG}{\overline{G}}
\newcommand{\bX}{Y}
\renewcommand{\o}{\bm{0}}
\numberwithin{equation}{section}
\definecolor{darkgreen}{cmyk}{1,0,1,.2}
\definecolor{darkorchid}{rgb}{0.6, 0.2, 0.8}
\definecolor{persimmon}{rgb}{0.93, 0.35, 0.0}
\long\def\red#1{\textcolor {red}{#1}} 
\long\def\blue#1{\textcolor {blue}{#1}}
\long\def\green#1{\textcolor {darkgreen}{#1}}
\newdimen\theight
\def\TeXref#1{%
             \leavevmode\vadjust{\setbox0=\hbox{{\tt
                     \quad\quad  {\small \textrm #1}}}%
             \theight=\ht0
             \advance\theight by \lineskip
             \kern -\theight \vbox to
             \theight{\rightline{\rlap{\box0}}%
             \vss}%
             }}%
\newcommand{\fnote}[1]{\TeXref{*}{\tiny{#1}}}
\long\def\bnote#1{\blue{\fnote{#1}}}
\long\def\gnote#1{\green{\fnote{#1}}}
\begin{document}
\title{Generalized Bishop frames on curves on $\EE^{4}$}
\author{Subaru Nomoto}
\author{Hiraku Nozawa}
\keywords{Space curve, Frenet frame, Bishop frame, rotation minimizing vector field}
\subjclass[2020]{}
\date{}


\address{Subaru Nomoto, Graduate School of Science and Engineering, Ritsumeikan University, Nojihigashi 1-1-1, Kusatsu, Shiga, 525-8577, Japan}
\email{gr0317ep@ed.ritsumei.ac.jp}

\address{Hiraku Nozawa, Department of Mathematical Sciences, Colleges of Science and Engineering, Ritsumeikan University, Nojihigashi 1-1-1, Kusatsu, Shiga, 525-8577, Japan}
\thanks{The second author is supported by JSPS KAKENHI Grant numbers 17K14195 and 20K03620}
\email{hnozawa@fc.ritsumei.ac.jp}

  \maketitle 
\begin{abstract}
    We introduce and study generalized Bishop frames on regular curves, which are generalizations of the Frenet and Bishop frames for regular curves on higher dimensional spaces. There are four types of generalized Bishop frames on regular curves on $\EE^{4}$ up to the change of the order of vectors fixing the first one which is the tangent vector. One of these four types of frames is a Bishop frame, and by a result of Bishop, every regular curve admits such a frame. We show that if a regular curve $\gamma$ on $\EE^{4}$ admits a Frenet frame, then $\gamma$ admits all four types of generalized Bishop frames. We also show that if the derivative of the tangent vector of a regular curve is nowhere vanishing, then the curve admits all three types of generalized Bishop frames except a frame of type F, which is related to the Frenet frame.
 \end{abstract}
 
\section{Introduction}

The Frenet frame is fundamental to study space curves, which allows us to classify the congruence classes of curves in terms of its torsion and curvature. The Bishop frame (or rotation minimizing frame) is a different type of frames on curves introduced by Bishop \cite{b}. It is an advantage of the Bishop frame that every regular curve admits the Bishop frame by a theorem of Bishop \cite{b}, while some regular curves do not admit Frenet frame.
Bishop frames are used in the study of curves, see \cite{KT2,dS,E,GN}. For the application of Bishop frames in differential geometry, see \cite{KT,BG,dSdS,MR}. The open problems on Bishop frames are listed in \cite{Fa}. The Bishop frame was used in other domains including computer graphics and engineering, see \cite{W,FL}.

In this article, we study relations between frames on curves on $\mathbb{E}^{4}$ obtained by generalizing the Frenet and Bishop frames. In order to motivate us, let us review the idea of Bishop to introduce Bishop frames. For a regular curve $\gamma : I \to \EE^{3}$ defined on an open interval $I$ with arc-length parametrization, a Bishop frame is a frame of $\gamma^{*}T\mathbb{E}^3$ 
of the form $\{\mathbb{T}, \mathbb{B}_{1}, \mathbb{B}_{2}\}$ where $\mathbb{T}$ is the tangent vector and satisfies the following differential equation: 
\[\begin{pmatrix}
\mathbb{T}^{\prime}\\
\mathbb{B}_{1}^{\prime}\\
\mathbb{B}_{2}^{\prime}
\end{pmatrix}
=\begin{pmatrix}
0&b_{1}&b_{2}\\
-b_{1}&0&0\\
-b_{2}&0&0
\end{pmatrix}
\begin{pmatrix}
\mathbb{T}\\
\mathbb{B}_{1}\\
\mathbb{B}_{2}
\end{pmatrix}
.
\]
Throughout this article, regular curves are parametrized by arc-length parameters unless otherwise stated. We will consider only frames on regular curves whose first vector is the tangent vector. We regard an orthonormal frame on a curve $I \to \EE^n$ as a matrix valued function $\ZZ : I \to O(n)$ such that the frame consists of the row vectors of $\ZZ$. For a frame on a regular curve, we will call the matrix valued function $X$ such that $\ZZ' = X\ZZ$ the coefficient matrix of the frame.
In order to introduce Bishop frames, Bishop considered three types of frames, whose coefficient matrices have one of the following forms for some functions $x_1$ and $x_2$:

\begin{minipage}{0.3\hsize}
\[\left(\begin{matrix}
0&x_{1}&0\\
-x_{1}&0&x_{2}\\
0&-x_{2}&0
\end{matrix}
\right), \]
\hspace*{45pt}(1)
\end{minipage}
\begin{minipage}{0.3\hsize}
\[\left(\begin{matrix}
0&x_{1}&x_{2}\\
-x_{1}&0&0\\
-x_{2}&0&0
\end{matrix}
\right), \]
\hspace*{45pt}(2)
\end{minipage}
\begin{minipage}{0.3\hsize}
\[\left(\begin{matrix}
0&0&x_{1}\\
0&0&x_{2}\\
-x_{1}&-x_{2}&0
\end{matrix}
\right). \]
\hspace*{45pt}(3)
\end{minipage}

A frame with the coefficient matrix (2) is exactly the Bishop frame. A frame with coefficient matrix of the form (1) is closely related to the Frenet frame; it is the Frenet frame if $x_1$ is positive.
Bishop remarked that if a frame $\{\TT, \ZZ_1, \ZZ_2\}$ has a coefficient matrix of the form (3), then the coefficient matrix of $\{\TT, \ZZ_2, \ZZ_1\}$ is of the form (1). Therefore, there are essentially two types among this kinds of frames on curves on $\EE^{3}$, which are the Frenet frame and Bishop frame. Generalizations of these frames on curves on general Riemannian manifolds are known 
\cite{E}
: let $\gamma : I \to \EE^{4}$ be a regular curve on a Riemannian manifold. An orthonormal frame $\TT, \ZZ_1, \ZZ_2, \ZZ_3$ on $\gamma$ is the Frenet frame or Bishop frame if its coefficient matrix $X$ is of the form 
\begin{align*}
\begin{pmatrix}
\phantom{-}0&\phantom{-}b_{1}&\phantom{-}b_{2}&\phantom{-}b_{3}\\
-b_{1}&\phantom{-}0&\phantom{-}0&\phantom{-}0\\
-b_{2}&\phantom{-}0&\phantom{-}0&\phantom{-}0\\
-b_{3}&\phantom{-}0&\phantom{-}0&\phantom{-}0
\end{pmatrix}&&&\hspace{15pt}\text{or}\hspace{-5pt}&&&
\begin{pmatrix}
\phantom{-}0&\phantom{-}f_{1}&\phantom{-}0&\phantom{-}0\\
-f_{1}&\phantom{-}0&\phantom{-}f_{2}&\phantom{-}0\\
\phantom{-}0&-f_{2}&\phantom{-}0&\phantom{-}f_{3}\\
\phantom{-}0&\phantom{-}0&-f_{3}&\phantom{-}0
\end{pmatrix}, 
\end{align*}
respectively, where $b_1, b_2, b_3,f_3$ are functions on $I$ and $f_1, f_2$ are positive functions on $I$. Generalizing the case of space curves, we give the following definition:
\begin{definition}
We call an orthonormal frame on a curve on $\EE^{4}$ a \emph{generalized Bishop frame} if its coefficient matrix has at most three nonzero entries above the main diagonal. 
\end{definition}
Except degenerate ones which has a zero column vector, there are 16 kinds of such frames (see Section \ref{sec:pre}). We can see that there are the following 4 equivalence classes of these 16 frames on curves up to the change of the order of vectors fixing the first one which is the tangent vector: If a frame has a coefficient matrix of the respective form for some functions $x_1,x_2,x_3$ up to the change of the order of vectors fixing the first one, we call it a \emph{generalized Bishop frame} of type B, C, D and F, respectively:
\begin{align*}
\begin{pmatrix}
\phantom{-}0&\phantom{-}x_{1}&\phantom{-}x_{2}&\phantom{-}x_{3}\\
-x_{1}&\phantom{-}0&\phantom{-}0&\phantom{-}0\\
-x_{2}&\phantom{-}0&\phantom{-}0&\phantom{-}0\\
-x_{3}&\phantom{-}0&\phantom{-}0&\phantom{-}0
\end{pmatrix}
, && 
\begin{pmatrix}
\phantom{-}0&\phantom{-}x_{1}&\phantom{-}x_{2}&\phantom{-}0\\
-x_{1}&\phantom{-}0&\phantom{-}0&\phantom{-}x_{3}\\
-x_{2}&\phantom{-}0&\phantom{-}0&\phantom{-}0\\
\phantom{-}0&-x_{3}&\phantom{-}0&\phantom{-}0
\end{pmatrix}, \\
\text{Type B} \hspace{40pt}&& \text{Type C}\hspace{40pt} 
\end{align*}
\begin{align*}
\begin{pmatrix}
\phantom{-}0&\phantom{-}x_{1}&\phantom{-}0&\phantom{-}0\\
-x_{1}&\phantom{-}0&\phantom{-}x_{2}&\phantom{-}x_{3}\\
\phantom{-}0&-x_{2}&\phantom{-}0&\phantom{-}0\\
\phantom{-}0&-x_{3}&\phantom{-}0&\phantom{-}0
\end{pmatrix}, &&
\begin{pmatrix}
\phantom{-}0&\phantom{-}x_{1}&\phantom{-}0&\phantom{-}0\\
-x_{1}&\phantom{-}0&\phantom{-}x_{2}&\phantom{-}0\\
\phantom{-}0&-x_{2}&\phantom{-}0&\phantom{-}x_{3}\\
\phantom{-}0&\phantom{-}0&-x_{3}&\phantom{-}0
\end{pmatrix}. \\
\text{Type D}\hspace{40pt} && \text{Type F}\hspace{40pt} 
\end{align*}
A frame of type F is closely related to the Frenet frame; it is Frenet frame if both $x_1$ and $x_2$ are positive. A frame of type B is a Bishop frame. By Bishop's theorem, every regular curve admits a Bishop frame, but there are well known examples of regular curves which do not admit the Frenet frame. 

In general, the relation between these four types of frames are not clear. But the following result shows that not all regular curves admit generalized Bishop frames of type C and D. Therefore it is a geometric property for regular curves whether they admit these frames or not.

\begin{theorem}\label{thm:noCD}
There is a regular curve which admits a generalized Bishop frame of type C but does not admit a frame of type D.
There is a regular curve which does not admit a frame of type C.
\end{theorem}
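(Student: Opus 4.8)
The plan is to reduce the existence of each frame type to a condition on the Bishop curvature data and then to build the two curves by prescribing that data. Fix a Bishop frame $\{\TT,\BB_{1},\BB_{2},\BB_{3}\}$, which exists by Bishop's theorem, write $\TT'=k_{1}\BB_{1}+k_{2}\BB_{2}+k_{3}\BB_{3}$, and set $\mathbf{k}=(k_{1},k_{2},k_{3})$. Every orthonormal frame on $\gamma$ with first vector $\TT$ has the form $\ZZ=\mathrm{diag}(1,A)\,\ZZ_{0}$, where $\ZZ_{0}$ is the matrix of the Bishop frame (coefficient matrix $\left(\begin{smallmatrix}0&\mathbf{k}^{\top}\\-\mathbf{k}&0\end{smallmatrix}\right)$) and $A\colon I\to O(3)$ is smooth with rows $\mathbf{a}_{1},\mathbf{a}_{2},\mathbf{a}_{3}$; a direct computation then gives the coefficient matrix of $\ZZ$ as $\left(\begin{smallmatrix}0&(A\mathbf{k})^{\top}\\-A\mathbf{k}&A'A^{\top}\end{smallmatrix}\right)$. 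Comparing with the normal forms, I would show that the frame is of type D exactly when $\mathbf{a}_{2}\cdot\mathbf{k}=\mathbf{a}_{3}\cdot\mathbf{k}=0$ (so $\mathbf{k}\in\RR\mathbf{a}_{1}$) and $\langle\mathbf{a}_{2}',\mathbf{a}_{3}\rangle\equiv0$, and of type C exactly when $\langle\mathbf{a}_{1}',\mathbf{a}_{2}\rangle\equiv\langle\mathbf{a}_{2}',\mathbf{a}_{3}\rangle\equiv0$ (which forces $\mathbf{a}_{2}$ to be a constant unit vector $\mathbf{e}$) and $\mathbf{a}_{3}\cdot\mathbf{k}\equiv0$. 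Since a rotation-minimizing orthonormal frame of any rank-two subbundle over an interval always exists, the type D condition simplifies to: there is a smooth unit field $\mathbf{a}_{1}$ with $\mathbf{k}(t)\in\RR\mathbf{a}_{1}(t)$ for all $t$; and the type C condition becomes: there is a constant unit vector $\mathbf{e}$ and a smooth unit field $\mathbf{a}_{1}\perp\mathbf{e}$ with the $\mathbf{e}^{\perp}$-component of $\mathbf{k}$ pointwise proportional to $\mathbf{a}_{1}$.

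With this dictionary the two curves are produced by prescribing $\mathbf{k}$: any smooth $\mathbf{k}\colon I\to\EE^{3}$ is the curvature vector of the curve obtained by integrating the Bishop equations and then integrating $\TT$. The decisive local feature is a point $t_{0}$ at which $\mathbf{k}$ vanishes to infinite order while $\mathbf{k}/|\mathbf{k}|$ has two distinct one-sided limiting directions $\mathbf{u},\mathbf{w}$ with $\mathbf{u}\neq\pm\mathbf{w}$; such a flat zero is easy to realize with bump functions, it prevents $\mathbf{k}$ from lying in any smooth line field (so type D is obstructed), and an elementary computation shows that the projections of $\mathbf{u}$ and $\mathbf{w}$ to $\mathbf{e}^{\perp}$ are parallel precisely when $\mathbf{e}\in\mathrm{span}(\mathbf{u},\mathbf{w})$. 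For the first statement I use a single such zero: type D fails, whereas taking $\mathbf{e}\in\mathrm{span}(\mathbf{u},\mathbf{w})$ and letting $\mathbf{k}$ tilt out of that plane away from $t_{0}$ makes the direction of the $\mathbf{e}^{\perp}$-projection extend smoothly and genuinely rotate, giving a (nondegenerate) type C frame. For the second statement I place three such zeros whose jump-planes $P_{i}=\mathrm{span}(\mathbf{u}_{i},\mathbf{w}_{i})$ satisfy $P_{1}\cap P_{2}\cap P_{3}=\{\o\}$; since a type C frame needs one constant $\mathbf{e}$ lying in every $P_{i}$, no admissible $\mathbf{e}$ exists and type C fails.

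The step I expect to be the main obstacle is the rigorous handling of smoothness at the zeros of $\mathbf{k}$, since the whole separation between the types is precisely the question of whether a direction field extends across a zero. Concretely I must verify that the flat bump constructions yield genuinely $C^{\infty}$ (indeed regular, unit-speed) curves, and, more importantly, that no frame outside the projection construction can repair a forbidden jump; the latter is guaranteed by the reduction of the first paragraph, which is exhaustive because every admissible frame is $\mathrm{diag}(1,A)\,\ZZ_{0}$, leaving no freedom beyond the choice of $\mathbf{e}$ and the sign of $\mathbf{a}_{1}$. The remaining work is bookkeeping away from the engineered zeros: keeping $\mathbf{k}$ nonzero there and choosing $\mathbf{e}$ off the great circles $\pm\mathbf{k}/|\mathbf{k}|$, so that the only obstructions to types C and D are the ones built in at the flat zeros.
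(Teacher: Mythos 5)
Your argument is correct, and it rests on the same two pillars as the paper's: the transformation formula for frames sharing the tangent vector (the paper's Lemma~\ref{lem:1}, which in your normalization $\ZZ=\operatorname{diag}(1,A)\,\ZZ_{0}$ yields the coefficient matrix $\bigl(\begin{smallmatrix}0&(A\mathbf{k})^{\top}\\-A\mathbf{k}&A'A^{\top}\end{smallmatrix}\bigr)$), and examples whose Bishop curvature vector vanishes to infinite order at isolated points while its direction jumps. The difference is in the organization. For the first assertion the paper writes down the explicit curve $(t,e^{-1/t},0,0)$ / $(t,0,e^{1/t},0)$ and applies its characterization of type D (Proposition~\ref{proposition:1}, which is exactly your ``$\mathbf{k}$ lies in a smooth line field'' criterion); your construction is essentially that example viewed through the Bishop data. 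For the second assertion the paper uses the same three-jump curvature ($e_{1}\to e_{2}\to e_{3}\to e_{1}$, so the jump planes meet only in the origin) but rules out a type C frame by a case analysis on which column of the constant factor $\overline{G}_{0}$ of the transformation is sent to $e_{2}$; your route --- first isolating the clean equivalence ``type C iff there is a constant unit $\mathbf{e}$ and a smooth unit field $\mathbf{a}_{3}\perp\mathbf{e}$ with $\mathbf{a}_{3}\cdot\mathbf{k}\equiv 0$,'' then noting that at each flat jump $\mathbf{a}_{3}(t_{i})$ is forced to be the unit normal of $P_{i}=\operatorname{span}(\mathbf{u}_{i},\mathbf{w}_{i})$, hence $\mathbf{e}\in\bigcap_{i}P_{i}=\{\mathbf{0}\}$ --- replaces that case analysis by one geometric observation and is cleaner. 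Two small points to make explicit in a write-up: forcing $\mathbf{a}_{3}(t_{i})\perp\mathbf{u}_{i},\mathbf{w}_{i}$ requires $\mathbf{k}\neq 0$ on sequences approaching $t_{i}$ from each side (your bump functions provide this, but it should be said), and your ``projections parallel iff $\mathbf{e}\in\operatorname{span}(\mathbf{u},\mathbf{w})$'' degenerates when $\mathbf{e}=\pm\mathbf{u}$ or $\pm\mathbf{w}$ --- harmless, since the needed conclusion $\mathbf{e}\in P_{i}$ still holds, but worth flagging.
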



In this article, we focus on the frames on the following types of curves.
\begin{definition}
A curve is said to be \emph{$2$-regular} if both the tangent vector and its derivative are nowhere vanishing. 
\end{definition}

The main results of this article are the following:
\begin{theorem}\label{thm:CD}
Every $2$-regular curve on $\EE^{4}$ admits a frame of type C and D, respectively.
\end{theorem}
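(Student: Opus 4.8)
The plan is to build both frames explicitly from the data $\mathbb{T}=\gamma'$ and the unit first normal $\mathbb{N}=\mathbb{T}'/\lVert\mathbb{T}'\rVert$, using that parallel (rotation-minimizing) frames always exist in auxiliary subbundles over the interval $I$. Since $\gamma$ is $2$-regular, $\mathbb{T}'$ is nowhere zero, so $\mathbb{N}$ is a globally defined smooth unit normal field and $\kappa:=\lVert\mathbb{T}'\rVert$ is a positive function. I regard the normal bundle $N=\{\mathbb{T}\}^{\perp}$, of rank $3$, as carrying the connection $\nabla_t$ obtained by orthogonally projecting the ordinary derivative onto $N$; a parallel orthonormal frame of $N$ is exactly a Bishop frame $\{B_1,B_2,B_3\}$ (type B), whose existence is Bishop's theorem. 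Note that for any orthonormal frame $\{\mathbb{T},\ZZ_1,\ZZ_2,\ZZ_3\}$ the subdiagonal entries of the first column are $\langle \ZZ_i',\mathbb{T}\rangle=-\kappa\langle\ZZ_i,\mathbb{N}\rangle$, so the shape of the first column is governed entirely by the components of $\mathbb{N}$ in the frame.

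For type D, the first row $(0,x_1,0,0)$ forces $\mathbb{T}'=x_1\ZZ_1$, so I am obliged to take $\ZZ_1=\mathbb{N}$ and $x_1=\kappa$. It then remains to choose $\{\ZZ_2,\ZZ_3\}$ as an orthonormal frame of the rank-$2$ subbundle $V=\{\mathbb{T},\mathbb{N}\}^{\perp}$ with $\langle\ZZ_2',\ZZ_3\rangle=0$; I obtain such a rotation-minimizing frame by starting from any orthonormal frame $\{W_2,W_3\}$ of $V$ and rotating through an angle $\theta$ solving the scalar linear ODE $\theta'=\langle W_2',W_3\rangle$, which is solvable on all of $I$. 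Because $\ZZ_2,\ZZ_3\perp\mathbb{N}$, the tangent components $\langle\ZZ_i',\mathbb{T}\rangle$ vanish for $i=2,3$, and the remaining entries assemble exactly into the type D matrix; this construction needs only $2$-regularity.

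For type C the structural constraints reduce to two conditions: (i) $\ZZ_3\perp\mathbb{N}$, equivalently $\mathbb{N}\in\operatorname{span}\{\ZZ_1,\ZZ_2\}$, so that $\mathbb{T}'$ has no $\ZZ_3$-component; and (ii) $\ZZ_2$ is a parallel section of $N$, which automatically confines the normal connection to the $\ZZ_1\ZZ_3$-plane and yields the prescribed zeros. I fix a Bishop frame $\{B_1,B_2,B_3\}$, write $\mathbb{N}=\sum_i n_iB_i$ so that $n=(n_1,n_2,n_3)\colon I\to S^2$ is smooth, and take $\ZZ_2=\sum_i c_iB_i$ for a constant unit vector $c$; such $\ZZ_2$ is automatically parallel. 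Provided $\ZZ_2$ is nowhere parallel to $\mathbb{N}$, I may set $\ZZ_1$ equal to the normalized projection $\mathbb{N}-\langle\mathbb{N},\ZZ_2\rangle\ZZ_2$ onto $\{\ZZ_2\}^{\perp}\cap N$ and let $\ZZ_3$ be the remaining unit normal, after which a componentwise check confirms the coefficient matrix is of type C.

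The main obstacle is exactly this transversality condition $c\neq\pm n(t)$ for all $t$, and it is settled by a genericity argument: the set $n(I)\cup(-n(I))$ is a countable union of Lipschitz images of compact subintervals, hence has measure zero in $S^2$, so its complement is nonempty and any $c$ chosen there works. Once $c$ is fixed, $\ZZ_1$ and $\ZZ_3$ are smooth because $\lVert\mathbb{N}-\langle\mathbb{N},\ZZ_2\rangle\ZZ_2\rVert=\sqrt{1-\langle n,c\rangle^2}$ is positive everywhere. I expect the type D construction and all the componentwise verifications to be routine; the only genuinely nontrivial inputs are the measure-zero choice of $c$ for type C and the standing use of $2$-regularity, which guarantees that $\mathbb{N}$ is defined and smooth throughout.
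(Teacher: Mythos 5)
Your argument is correct, and in fact the frames you build are the very ones the paper constructs; the difference lies in how the verification is packaged. For type C you and the paper use the same key idea: in the parallel trivialization given by a Bishop frame, the normalized curvature vector traces a measure-zero subset of the sphere (your Lipschitz-image argument; the paper applies Sard's theorem to $s\mapsto[\bm b(s)]\in\RR P^{2}$ in the proof of Theorem 3.5), so a constant unit combination $\ZZ_2$ of the Bishop vectors can be chosen nowhere parallel to $\mathbb{N}$; your $\ZZ_1$, the normalized projection of $\mathbb{N}$ onto $\{\ZZ_2\}^{\perp}$, is exactly the rotation through $\theta$ with $(\cos\theta,\sin\theta)=(b_1,b_3)/\sqrt{b_1^2+b_3^2}$ appearing in Lemma \ref{lem:2}. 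Where you diverge is in method: the paper funnels everything through the transformation equation $\hG'=X_1\hG-\hG X_0$ of Lemma \ref{lem:1} and a block-matrix computation, while you check the coefficient matrix entry by entry via inner products; and for type D the paper goes through Proposition \ref{proposition:1}, applying Bishop's theorem to an auxiliary curve with tangent $\mathbb{N}$ and reordering, whereas you integrate the scalar angle equation $\theta'=\pm\langle W_2',W_3\rangle$ directly in the rank-two bundle $\{\mathbb{T},\mathbb{N}\}^{\perp}$ (note the sign: with $\ZZ_2=\cos\theta\,W_2+\sin\theta\,W_3$ one needs $\theta'=-\langle W_2',W_3\rangle$, which of course does not affect solvability). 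Your route is more elementary and self-contained; what the paper's formulation buys is the explicit curvature relations between the Bishop and type C data, which it reuses in Section \ref{sec:ex} to exhibit a curve with no type C frame. Both proofs are complete for the statement at hand.
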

Note that every regular curve admits a Bishop frame by Bishop's theorem. The following result implies that the frame
 of type F on $2$-regular curves is distinguished from other types.
 
\begin{theorem}\label{thm:noF}
There exists a $2$-regular curve on $\EE^{4}$ which does not admit a frame of type F.
\end{theorem}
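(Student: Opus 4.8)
The plan is to reduce the existence of a type F frame on a $2$-regular curve to a one-dimensional direction-extension problem, and then to construct a curve for which the relevant direction develops a genuine projective discontinuity at a point. Suppose $\gamma$ is $2$-regular and carries a type F frame $\TT,\ZZ_1,\ZZ_2,\ZZ_3$. Reading off the first row of the type F coefficient matrix gives $\TT'=x_1\ZZ_1$; since $\TT'=\gamma''$ is nowhere zero, $x_1=\pm|\TT'|$ is nowhere zero and $\ZZ_1=\pm\TT'/|\TT'|$ is forced, with a globally constant sign because $I$ is an interval. The second row gives $x_2\ZZ_2=\ZZ_1'+x_1\TT=:w$, a vector field intrinsic to $\gamma$ up to sign and orthogonal to $\TT$ and $\ZZ_1$. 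Hence on the open set where $w\neq 0$ one must have $\ZZ_2=\pm w/|w|$, so $\ZZ_2$ is a continuous unit vector field lying on the line $\RR w$. The reordering freedom ``fixing the first vector'' does not help: in any type F labelling the unique neighbour of $\TT$ in the underlying path must satisfy $\TT'=x_1\cdot(\text{neighbour})$, forcing it to be $\pm\TT'/|\TT'|$, after which the next vector is again pinned to the line $\RR w$. Thus it suffices to build a $2$-regular curve for which the projective direction $[w]$ fails to extend continuously across a point.

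Next I would compute $w$ in a Bishop (type B) frame. Represent $\gamma$ through $\TT,B_1,B_2,B_3$ with $\TT'=\sum_i k_i B_i$ and $B_i'=-k_i\TT$; conversely, any smooth $(k_1,k_2,k_3)$ integrates, via the antisymmetric system which preserves $O(4)$, to such a frame and then to a curve $\gamma=\int\TT$. Writing $\kappa=|\TT'|=(\textstyle\sum_i k_i^2)^{1/2}$ and $a_i=k_i/\kappa$, a short computation using $B_i'=-k_i\TT$ and $\sum_i a_i^2=1$ yields $\ZZ_1=\sum_i a_i B_i$ and $w=\sum_i a_i' B_i$. Therefore $[w]$ is exactly the projective direction of the velocity of the spherical curve $a=(a_1,a_2,a_3)\colon I\to S^{2}$, and it is ill-defined precisely where $a$ is instantaneously stationary with distinct one-sided directions of approach.

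Finally I would give the explicit construction. Take $\kappa\equiv 1$ and let $a\colon\RR\to S^{2}$ be the path that for $t\ge 0$ runs along $(\cos\phi(t),\sin\phi(t),0)$ and for $t\le 0$ along $(\cos\psi(t),0,\sin\psi(t))$, where $\phi(t)=e^{-1/t^{2}}$ for $t>0$ and $\phi\equiv 0$ for $t\le 0$, while $\psi(t)=e^{-1/t^{2}}$ for $t<0$ and $\psi\equiv 0$ for $t\ge 0$. Both pieces lie on $S^{2}$ and agree to infinite order at $t=0$ (all derivatives vanish there and $a(0)=(1,0,0)$), so $a$ is $C^{\infty}$; setting $k_i=a_i$ gives a smooth, nowhere-zero $(k_1,k_2,k_3)$ with $\sum_i k_i^2\equiv 1$, hence a smooth $2$-regular curve with $|\gamma''|\equiv 1$. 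For $0<t\ll 1$ one computes $w=\phi'(t)\bigl(-\sin\phi\,B_1+\cos\phi\,B_2\bigr)$, so $[w(t)]\to[B_2(0)]$, whereas for $-1\ll t<0$ one has $w=\psi'(t)\bigl(-\sin\psi\,B_1+\cos\psi\,B_3\bigr)$, so $[w(t)]\to[B_3(0)]$. Since $B_2(0)$ and $B_3(0)$ are orthonormal, these projective limits differ, while $w\neq 0$ for $0<|t|<\varepsilon$; by the reduction no continuous $\ZZ_2$ can exist, so $\gamma$ admits no type F frame.

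The main obstacle is the construction itself: one must force $[w]$ to be genuinely discontinuous while keeping $\gamma$ smooth and $2$-regular. The tension is that for a $C^{2}$ curve a finite-order zero of $w$ always admits a continuous direction (after dividing out the lowest-order term), so the discontinuity has to be produced at an infinitely flat stationary point of $a$ on $S^{2}$ whose two one-sided approach directions are linearly independent; the flat functions $\phi,\psi$ are precisely what reconcile this with $C^{\infty}$ smoothness. The remaining points—smoothness of $a$, the identity $w=\sum_i a_i'B_i$, and the irrelevance of the sign of $\ZZ_1$ and of the admissible reorderings—are routine.
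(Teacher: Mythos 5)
Your proof is correct, and at its heart it realizes the same geometric mechanism as the paper's own example: a $2$-regular curve glued from two arcs, each contained in a $3$-dimensional hyperplane, joined at an infinitely flat point so that the ``second normal'' direction has two different one-sided limits. (Your curve, prescribed by Bishop curvatures $(k_1,k_2,k_3)=a$ with $k_3\equiv 0$ for $t>0$ and $k_2\equiv 0$ for $t<0$, lies in a hyperplane on each half for exactly the same reason as the paper's explicitly written tangent vector field.) Where you genuinely differ is in how the contradiction is extracted. The paper argues that on each open half the first three vectors of a type F frame must span the corresponding hyperplane, invoking the nonvanishing curvature and torsion of the Frenet frame of each restricted half, so that at $t=0$ three orthonormal vectors would have to span the $2$-dimensional intersection. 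You instead isolate the single intrinsic direction $[w]$ with $w=\ZZ_1'+x_1\TT$, observe that in any admissible ordering the vector adjacent to $\TT$ is forced to be $\pm\TT'/|\TT'|$ and the next one is pinned to the line $\RR w$ wherever $w\neq 0$ (your check that $\TT$ is an endpoint of the underlying path in every type F matrix is exactly what is needed here), and then compute $w=\sum_i a_i'B_i$ in a Bishop frame. This reduction buys you something: it names precisely which projective direction must extend continuously, it avoids any appeal to the Frenet frames of the two halves, and the Bishop-curvature presentation makes smoothness and $2$-regularity immediate since $|\gamma''|\equiv 1$, whereas these points require separate verification for the paper's explicit $\TT$. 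The individual steps I checked --- the globally constant sign of $x_1$, the identity $w=\sum_i a_i'B_i$ using $\sum_i a_i^2=1$, the nonvanishing of $w$ for $t\neq 0$, and the orthogonality of the two one-sided limits $[B_2(0)]$ and $[B_3(0)]$ --- are all sound.
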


A basic tool of the proof is the differential equation of transformations between two frames (Lemma \ref{lem:1}). Let $\gamma$ be an arc-length parametrized regular curve with a Bishop frame $\BB$. 
Assume that $\gamma$ admits a frame $\FF$ of type F. Then we observe that  the transformation $\hG = \FF \BB^{-1}$ from  $\BB$ to $\FF$ satisfies the following differential equation (see Lemma \ref{lem:1} below):
\begin{equation}\label{eq:trans}
\hG' = X_{\FF} \hG - \hG X_{\BB},  
\end{equation}
where $X_{\BB}$ and $X_{\FF}$ are the coefficient matrix of $\BB$ and $\FF$, respectively. We can see that, conversely, given $X_{\BB}$, if the equation \eqref{eq:trans} has a solution $\hG$ for some $X_{\FF}$ of the form of coefficient matrix of a frame of type F, then $\hG\BB$ is a frame of type F (see Lemma \ref{lem:1} below). In this way, more generally for frames of other types, we can translate the existence problem of a frame of given type on a curve to the existence of a solution of a differential equation such that a part of the coefficients, the entries of the coefficient matrix of the target frame, are undetermined. The main results are proved by showing the existence of solutions of the differential equations similar to \eqref{eq:trans} under a suitable choice of the undetermined coefficients. 


\section{Basic properties of frames}\label{sec:pre}



There are 16 kinds of generalized Bishop frames on regular curves on $\EE^{4}$, excluding frames whose coefficient matrix has a zero column vector. We will study the relation between these frames. It is easy to see that they are classified into four types up to the action of the symmetric group $\mathfrak{S}_{3}$ of order $3$ which swaps the second, third and fourth vector of the frame as follows:

\[\begin{psmatrix}
\phantom{-}0&\phantom{-}\blacksquare&\phantom{-}\blacksquare&\phantom{-}\blacksquare\\
-\blacksquare&\phantom{-}0&\phantom{-}0&\phantom{-}0\\
-\blacksquare&\phantom{-}0&\phantom{-}0&\phantom{-}0\\
-\blacksquare&\phantom{-}0&\phantom{-}0&\phantom{-}0
\end{psmatrix}\]
\begin{center}type B\end{center}

\begin{align*}
\begin{psmatrix}
\phantom{-}0&\phantom{-}\blacksquare&\phantom{-}\blacksquare&\phantom{-}0\\
-\blacksquare&\phantom{-}0&\phantom{-}0&\phantom{-}\blacksquare\\
-\blacksquare&\phantom{-}0&\phantom{-}0&\phantom{-}0\\
\phantom{-}0&-\blacksquare&\phantom{-}0&\phantom{-}0
\end{psmatrix}
\begin{psmatrix}
\phantom{-}0&\phantom{-}\blacksquare&\phantom{-}\blacksquare&\phantom{-}0\\
-\blacksquare&\phantom{-}0&\phantom{-}0&\phantom{-}0\\
-\blacksquare&\phantom{-}0&\phantom{-}0&\phantom{-}\blacksquare\\
\phantom{-}0&\phantom{-}0&-\blacksquare&\phantom{-}0
\end{psmatrix}
\begin{psmatrix}
\phantom{-}0&\phantom{-}\blacksquare&\phantom{-}0&\phantom{-}\blacksquare\\
-\blacksquare&\phantom{-}0&\phantom{-}0&\phantom{-}0\\
\phantom{-}0&\phantom{-}0&\phantom{-}0&\phantom{-}\blacksquare\\
-\blacksquare&\phantom{-}0&-\blacksquare&\phantom{-}0
\end{psmatrix}\\
\begin{psmatrix}
\phantom{-}0&\phantom{-}\blacksquare&\phantom{-}0&\phantom{-}\blacksquare\\
-\blacksquare&\phantom{-}0&\phantom{-}\blacksquare&\phantom{-}0\\
\phantom{-}0&-\blacksquare&\phantom{-}0&\phantom{-}0\\
-\blacksquare&\phantom{-}0&\phantom{-}0&\phantom{-}0
\end{psmatrix}
\begin{psmatrix}
\phantom{-}0&\phantom{-}0&\phantom{-}\blacksquare&\phantom{-}\blacksquare\\
\phantom{-}0&\phantom{-}0&\phantom{-}\blacksquare&\phantom{-}0\\
-\blacksquare&-\blacksquare&\phantom{-}0&\phantom{-}0\\
-\blacksquare&\phantom{-}0&\phantom{-}0&\phantom{-}0
\end{psmatrix}\begin{psmatrix}
\phantom{-}0&\phantom{-}0&\phantom{-}\blacksquare&\phantom{-}\blacksquare\\
\phantom{-}0&\phantom{-}0&\phantom{-}0&\phantom{-}\blacksquare\\
-\blacksquare&\phantom{-}0&\phantom{-}0&\phantom{-}0\\
-\blacksquare&-\blacksquare&\phantom{-}0&\phantom{-}0
\end{psmatrix}
\end{align*}
\begin{center}type C\end{center}

\[\begin{psmatrix}
\phantom{-}0&\phantom{-}\blacksquare&\phantom{-}0&\phantom{-}0\\
-\blacksquare&\phantom{-}0&\phantom{-}\blacksquare&\phantom{-}\blacksquare\\
\phantom{-}0&-\blacksquare&\phantom{-}0&\phantom{-}0\\
\phantom{-}0&-\blacksquare&\phantom{-}0&\phantom{-}0
\end{psmatrix}
\begin{psmatrix}
\phantom{-}0&\phantom{-}0&\phantom{-}\blacksquare&\phantom{-}0\\
\phantom{-}0&\phantom{-}0&\phantom{-}\blacksquare&\phantom{-}0\\
-\blacksquare&-\blacksquare&\phantom{-}0&\phantom{-}\blacksquare\\
\phantom{-}0&\phantom{-}0&-\blacksquare&\phantom{-}0
\end{psmatrix}
\begin{psmatrix}
\phantom{-}0&\phantom{-}0&\phantom{-}0&\phantom{-}\blacksquare\\
\phantom{-}0&\phantom{-}0&\phantom{-}0&\phantom{-}\blacksquare\\
\phantom{-}0&\phantom{-}0&\phantom{-}0&\phantom{-}\blacksquare\\
-\blacksquare&-\blacksquare&-\blacksquare&\phantom{-}0\\
\end{psmatrix}
\]
\begin{center}type D\end{center}

\begin{align*}
\begin{psmatrix}
\phantom{-}0&\phantom{-}\blacksquare&\phantom{-}0&\phantom{-}0\\
-\blacksquare&\phantom{-}0&\phantom{-}\blacksquare&\phantom{-}0\\
\phantom{-}0&-\blacksquare&\phantom{-}0&\phantom{-}\blacksquare\\
\phantom{-}0&\phantom{-}0&-\blacksquare&\phantom{-}0
\end{psmatrix}
\begin{psmatrix}
\phantom{-}0&\phantom{-}0&\phantom{-}0&\phantom{-}\blacksquare\\
\phantom{-}0&\phantom{-}0&\phantom{-}\blacksquare&\phantom{-}\blacksquare\\
\phantom{-}0&-\blacksquare&\phantom{-}0&\phantom{-}0\\
-\blacksquare&-\blacksquare&\phantom{-}0&\phantom{-}0
\end{psmatrix}
\begin{psmatrix}
\phantom{-}0&\phantom{-}0&\phantom{-}0&\phantom{-}\blacksquare\\
\phantom{-}0&\phantom{-}0&\phantom{-}\blacksquare&\phantom{-}0\\
\phantom{-}0&-\blacksquare&\phantom{-}0&\phantom{-}\blacksquare\\
-\blacksquare&\phantom{-}0&-\blacksquare&\phantom{-}0
\end{psmatrix}\\
\begin{psmatrix}
\phantom{-}0&\phantom{-}\blacksquare&\phantom{-}0&\phantom{-}0\\
-\blacksquare&\phantom{-}0&\phantom{-}0&\phantom{-}\blacksquare\\
\phantom{-}0&\phantom{-}0&\phantom{-}0&\phantom{-}\blacksquare\\
\phantom{-}0&-\blacksquare&-\blacksquare&\phantom{-}0
\end{psmatrix}
\begin{psmatrix}
\phantom{-}0&\phantom{-}0&\phantom{-}\blacksquare&\phantom{-}0\\
\phantom{-}0&\phantom{-}0&\phantom{-}\blacksquare&\phantom{-}\blacksquare\\
-\blacksquare&-\blacksquare&\phantom{-}0&\phantom{-}0\\
\phantom{-}0&-\blacksquare&\phantom{-}0&\phantom{-}0
\end{psmatrix}
\begin{psmatrix}
\phantom{-}0&\phantom{-}0&\phantom{-}\blacksquare&\phantom{-}0\\
\phantom{-}0&\phantom{-}0&\phantom{-}0&\phantom{-}\blacksquare\\
-\blacksquare&\phantom{-}0&\phantom{-}0&\phantom{-}\blacksquare\\
\phantom{-}0&-\blacksquare&-\blacksquare&\phantom{-}0
\end{psmatrix}
\end{align*}
\begin{center}type F\end{center}

A frame of type B is nothing but a Bishop frame. 
Clearly a regular curve admits one of generalized Bishop frames if and only if it admits all generalized Bishop frames of the same type. 

We can characterize the curves which admit a frame of type D as follows.

\begin{proposition}\label{proposition:1}
For a regular curve $\gamma$ on $\EE^{4}$, the following are equivalent:
\begin{enumerate}
    \item $\gamma$ admits a frame of type D.
    \item There exists a smooth unit normal vector field $\mathbb{D}_{1}$ and a smooth function $d_1$ such that  $\mathbb{T}^{\prime}=d_{1}\mathbb{D}_{1}$.
\end{enumerate}
\end{proposition}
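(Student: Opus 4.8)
The plan is to prove (1) $\Rightarrow$ (2) by simply reading off the first row of the coefficient matrix, and to prove (2) $\Rightarrow$ (1) by taking $\DD_1$ as the second frame vector and constructing the last two vectors as a rotation-minimizing frame of the plane bundle orthogonal to $\TT$ and $\DD_1$.

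For (1) $\Rightarrow$ (2), suppose $\gamma$ carries a frame $\{\TT, \DD_1, \DD_2, \DD_3\}$ of type D with coefficient functions $x_1, x_2, x_3$. The first row of its coefficient matrix gives $\TT' = x_1 \DD_1$. Since $\DD_1$ is a unit vector orthogonal to $\TT$, it is a smooth unit normal vector field, so taking $d_1 = x_1$ yields (2).

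For the converse, assume $\TT' = d_1 \DD_1$ for a smooth unit normal vector field $\DD_1$. As $\TT$ and $\DD_1$ are pointwise orthonormal, $V = \{\TT, \DD_1\}^{\perp}$ is a smooth rank-two subbundle of $\gamma^{*}T\EE^{4}$ over $I$, and since $I$ is an interval it admits a smooth orthonormal frame $\{e_2, e_3\}$. I would then seek $\DD_2 = \cos\theta\, e_2 + \sin\theta\, e_3$ and $\DD_3 = -\sin\theta\, e_2 + \cos\theta\, e_3$ for a function $\theta$ to be chosen. For the resulting orthonormal frame $\{\TT, \DD_1, \DD_2, \DD_3\}$, the entries of the coefficient matrix above the diagonal that are forbidden in type D are $\langle \TT', \DD_2\rangle$, $\langle \TT', \DD_3\rangle$ and $\langle \DD_2', \DD_3\rangle$; the first two vanish automatically because $\TT' = d_1 \DD_1$ is orthogonal to $V$, and the conditions $\langle \DD_2', \TT\rangle = \langle \DD_3', \TT\rangle = 0$ follow for the same reason.

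The key step is thus to arrange $\langle \DD_2', \DD_3\rangle = 0$. Writing $\omega = \langle e_2', e_3\rangle$, a direct computation (using $\langle e_i', e_i\rangle = 0$ and $\langle e_2', e_3\rangle = -\langle e_3', e_2\rangle$, and noting that only the $V$-components of $e_2', e_3'$ contribute) gives $\langle \DD_2', \DD_3\rangle = \theta' + \omega$, so it suffices to solve the first-order linear ODE $\theta' = -\omega$, which always has a smooth solution by integration. With such a $\theta$, the frame $\{\TT, \DD_1, \DD_2, \DD_3\}$ has coefficient matrix of type D, giving (1). The only point demanding care is this reduction, which is exactly the one-dimensional rotation-minimizing construction underlying Bishop's theorem; the rest of the argument is formal.
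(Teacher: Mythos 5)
Your proof is correct, but it takes a different route from the paper's. For (2) $\Rightarrow$ (1) the paper does not construct $\DD_2,\DD_3$ by hand: it considers the auxiliary regular curve $\delta$ with tangent vector $\DD_1$ and invokes Bishop's theorem to obtain a Bishop frame of $\delta$ of the form $\DD_1,\TT,\DD_2,\DD_3$ (the hypothesis $\TT'=d_1\DD_1$ is exactly what makes $\TT$ a relatively parallel normal field along $\delta$, so it can be taken as the second vector of that frame); permuting the first two vectors then turns the Bishop coefficient matrix of $\delta$ into a type D matrix for $\gamma$. You instead re-derive the needed piece of Bishop's theorem directly: trivialize the rank-two bundle $V=\{\TT,\DD_1\}^{\perp}$ over the interval, and rotate a frame $e_2,e_3$ by an angle $\theta$ solving $\theta'=-\langle e_2',e_3\rangle$ to kill the single off-diagonal connection coefficient; your computation $\langle\DD_2',\DD_3\rangle=\theta'+\omega$ is right, and the remaining forbidden entries vanish automatically from $\TT'=d_1\DD_1$ and skew-symmetry, as you say. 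The paper's argument is shorter because it leans on Bishop's theorem as a black box and fits the paper's overall strategy of relating frame types by permutation; yours is self-contained and makes explicit that the only analytic input is solving one scalar linear ODE. Both are valid.
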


\begin{proof}
Clearly (1) implies (2). Let us show (1) assuming (2). Let $\gamma$ be a regular curve which satisfies the condition (2). Let $\mathbb{D}_{1}$ be a unit normal vector field on $\gamma$ such that $\mathbb{T}^{\prime}=d_{1}\mathbb{D}_{1}$. Let us consider a regular curve $\delta$ whose tangent vector is $\DD_1$. By a result of Bishop, $\delta$ admits a  Bishop frame of the form $\DD_1, \TT, \DD_2,\DD_3$. 
Then we have
\[\left(\begin{matrix}
\mathbb{D}_{1}^{\prime}\\
\mathbb{T}^{\prime}\\
\mathbb{D}_{2}^{\prime}\\
\mathbb{D}_{3}^{\prime}
\end{matrix}\right)=\left(\begin{matrix}
0&x_{1}&x_2&x_3\\
-x_{1}&0&0&0\\
-x_{2}&0&0&0\\
-x_{3}&0&0&0
\end{matrix}\right)\left(\begin{matrix}
\mathbb{D}_{1}\\
\mathbb{T}\\
\mathbb{D}_{2}\\
\mathbb{D}_{3}
\end{matrix}\right)\]
for some $x_1, x_2, x_3$, which implies that $\TT, \DD_1, \DD_2,\DD_3$ is a frame of type D.
\end{proof}

With this proposition, it is easy to construct an example of regular curve which does not admit a frame of type D, which shows the first half of Theorem \ref{thm:noCD}: 

\begin{example}\label{ex:noD}
Let $\gamma$ be a curve on $\EE^{4}$ defined by
\begin{equation}\label{eq:gamma}
\gamma(t)=
\begin{cases}(t, e^{-\frac{1}{t}},0,0) & t>0\\
(0, 0, 0, 0) & t=0\\
(t,0, e^{\frac{1}{t}},0) & t<0.
\end{cases}
\end{equation}
Note that the parameter of $\gamma$ is not an arc-length.
 Then, it is easy to see that $\gamma''/|\gamma''|$ is not continuous at $t=0$. Therefore, $\gamma$ does not admit a frame of type D. It is easy to see that this $\gamma$ admits a frame of type C, and hence it shows the first half of Theorem \ref{thm:noCD}.
\end{example}

The later half of Theorem \ref{thm:noCD} is proved in Section \ref{sec:ex}.

If a regular curve admits a Frenet frame, then the normalization of the differential of the tangent vector satisfies the condition for $\mathbb{D}_{1}$ in (2) of the last proposition. Thus we get the following:

\begin{corollary}
If a regular curve admits a frame of type F, then it admits a frame of type D.
\end{corollary}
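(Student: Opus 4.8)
The plan is to reduce the statement directly to Proposition \ref{proposition:1}, which characterizes the curves admitting a frame of type D by the single condition that $\TT'$ be a smooth function multiple of a smooth unit normal vector field. Since a frame of type F already displays $\TT'$ in exactly this shape, the corollary should follow almost immediately, and the work consists only in reading off the right entry of the coefficient matrix.

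First I would write the type F frame as $\TT, \ZZ_1, \ZZ_2, \ZZ_3$, that is, as a matrix-valued function $\FF : I \to O(4)$ whose rows are these vectors, with coefficient matrix
\[
X_{\FF} = \begin{pmatrix}
\phantom{-}0 & \phantom{-}x_1 & \phantom{-}0 & \phantom{-}0 \\
-x_1 & \phantom{-}0 & \phantom{-}x_2 & \phantom{-}0 \\
\phantom{-}0 & -x_2 & \phantom{-}0 & \phantom{-}x_3 \\
\phantom{-}0 & \phantom{-}0 & -x_3 & \phantom{-}0
\end{pmatrix}
\]
for some smooth functions $x_1, x_2, x_3$ on $I$. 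Expanding the defining relation $\FF' = X_{\FF}\FF$ and reading off its first row yields $\TT' = x_1 \ZZ_1$.

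Next I would observe that $\ZZ_1$, being a row of the orthonormal matrix $\FF$ other than the tangent row, is a smooth unit vector field orthogonal to $\TT$, hence a smooth unit normal vector field on $\gamma$, while $x_1$ is a smooth function on $I$. Setting $\DD_1 = \ZZ_1$ and $d_1 = x_1$ therefore realizes condition (2) of Proposition \ref{proposition:1}, and applying that proposition produces a frame of type D on $\gamma$, which finishes the argument.

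Since the whole proof is a direct appeal to Proposition \ref{proposition:1}, there is essentially no obstacle to overcome; the only point worth recording is that nothing in the argument requires $x_1$ to be positive or nowhere vanishing, so the conclusion holds for an arbitrary frame of type F and not merely for the Frenet frame (the special case in which $x_1$ and $x_2$ are positive). This is the same idea expressed in the remark preceding the corollary, where it is phrased via the normalization of $\TT'$ in the presence of a Frenet frame.
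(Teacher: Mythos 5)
Your proof is correct and takes essentially the same route as the paper, which also deduces the corollary from condition (2) of Proposition \ref{proposition:1} by reading off $\TT' = x_1\ZZ_1$ from the first row of the type F coefficient matrix. If anything, your version is slightly more careful than the paper's one-line remark, since you note explicitly that no positivity or nonvanishing of $x_1$ is needed, so the argument covers arbitrary frames of type F and not only the Frenet frame.
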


\begin{corollary}
Every $2$-regular curve admits a frame of type D.
\end{corollary}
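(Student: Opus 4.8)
The plan is to deduce this directly from Proposition \ref{proposition:1}: it suffices to verify that every $2$-regular curve satisfies condition (2) of that proposition, and then the proposition does the rest. This is the same normalization idea already used just above for curves admitting a Frenet frame; the point is that $2$-regularity is exactly the hypothesis needed to make that normalization globally smooth without assuming a Frenet frame exists.

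Let $\gamma$ be a $2$-regular curve parametrized by arc-length, with unit tangent vector field $\TT = \gamma'$. By the definition of $2$-regularity, the derivative $\TT'$ is nowhere vanishing. First I would set $\DD_1 = \TT'/|\TT'|$ and $d_1 = |\TT'|$. Since $\TT'$ is a smooth vector field along $\gamma$ that is nowhere zero, both $\DD_1$ and $d_1$ are smooth: the norm function is smooth away from the origin, and $\TT'$ avoids the origin by hypothesis.

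Next I would check that $\DD_1$ is a unit \emph{normal} vector field. It has unit length by construction, and differentiating the identity $\langle \TT, \TT \rangle = 1$ gives $\langle \TT', \TT \rangle = 0$, so $\TT'$, and hence $\DD_1$, is orthogonal to $\TT$; that is, $\DD_1$ is normal to $\gamma$. Since $\TT' = d_1 \DD_1$ by construction, condition (2) of Proposition \ref{proposition:1} is satisfied, and applying that proposition shows that $\gamma$ admits a frame of type D.

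I expect there to be no substantive obstacle here: the entire content is the observation that the nowhere-vanishing hypothesis on $\TT'$ is precisely what guarantees that the normalization $\TT'/|\TT'|$ is globally smooth, which is the only place where smoothness could fail. For a general regular curve $\TT'$ may vanish (or change direction discontinuously at a zero, as in Example \ref{ex:noD}), and it is exactly such behavior that $2$-regularity rules out.
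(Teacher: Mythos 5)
Your proposal is correct and is exactly the argument the paper intends: the corollary is deduced from Proposition \ref{proposition:1} by observing that $2$-regularity makes the normalization $\DD_1 = \TT'/|\TT'|$ (with $d_1 = |\TT'|$) globally smooth, just as the preceding remark does for curves with a Frenet frame. No gaps.
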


It is more involved to characterize curves which admits a frame of type C.

\begin{proposition}
A regular curve $\gamma$ admits a frame of type C if and only if there exists a unit normal vector field $\ZZ_{0}$ on $\gamma$ such that a curve whose tangent vector is $\ZZ_{0}$ admits a frame of type F of the form $\ZZ_{0}, \ZZ_{1}, \TT, \ZZ_{3}$, or of the form$\ZZ_{0}, \TT, \ZZ_{1}, \ZZ_{3}$
whose coefficient matrix is of the form
\begin{equation}\label{eq:frenet}
\begin{pmatrix}
\phantom{-}0&\phantom{-}f_{1}&\phantom{-}0&\phantom{-}0\\
-f_{1}&\phantom{-}0&\phantom{-}f_{2}&\phantom{-}0\\
\phantom{-}0&-f_{2}&\phantom{-}0&\phantom{-}f_{3}\\
\phantom{-}0&\phantom{-}0&-f_{3}&\phantom{-}0
\end{pmatrix}.
\end{equation}
\end{proposition}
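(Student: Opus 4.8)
The plan is to prove both implications by a direct reordering of the frame vectors, exploiting the fact that frames of type C and type F share the same underlying pattern --- the adjacency pattern of a path on four vertices --- and differ only in the position of the tangent vector along that path. Recall that an orthonormal frame on a curve is a map $\ZZ\colon I \to O(4)$ whose coefficient matrix is the skew-symmetric matrix $X=\ZZ'\ZZ^{T}$, and that reordering the rows of $\ZZ$ by a permutation matrix $P$ replaces $X$ by the conjugate $PXP^{T}$. Moreover, given any unit normal vector field $\ZZ_0$ on $\gamma$, the integral $\epsilon(t)=\int \ZZ_0$ is an arc-length parametrized regular curve whose tangent vector is $\ZZ_0$ and which carries the very same frame, with the same coefficient matrix, now regarded as a frame of $\epsilon^{*}T\EE^{4}$. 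Thus, in contrast with the existence theorems where the transformation equation of Lemma \ref{lem:1} is used, the content here is the purely combinatorial statement that a path pattern with $\TT$ in the second position (type C) coincides with a path pattern whose new tangent sits at an endpoint (type F).

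For the forward direction I would start from a type C frame $\TT,\ZZ_1,\ZZ_2,\ZZ_3$ of $\gamma$, whose path pattern is $\ZZ_2-\TT-\ZZ_1-\ZZ_3$, and take $\ZZ_0$ to be one of the two endpoints $\ZZ_2,\ZZ_3$. Choosing $\ZZ_0=\ZZ_2$ and reordering the frame as $\ZZ_0,\TT,\ZZ_1,\ZZ_3$, a direct differentiation of the type C relations gives the coefficient matrix
\[
\begin{pmatrix}
\phantom{-}0&-x_{2}&\phantom{-}0&\phantom{-}0\\
\phantom{-}x_{2}&\phantom{-}0&\phantom{-}x_{1}&\phantom{-}0\\
\phantom{-}0&-x_{1}&\phantom{-}0&\phantom{-}x_{3}\\
\phantom{-}0&\phantom{-}0&-x_{3}&\phantom{-}0
\end{pmatrix},
\]
which is tridiagonal, hence of the form \eqref{eq:frenet}; this produces the second listed form. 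Choosing instead $\ZZ_0=\ZZ_3$ and reordering as $\ZZ_0,\ZZ_1,\TT,\ZZ_2$ yields the first listed form after renaming $\ZZ_2$ as $\ZZ_3$. In either case $\ZZ_0$ is orthogonal to $\TT$, so it is a unit normal vector field on $\gamma$, and the required $\ZZ_0$ exists.

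For the converse I would take the type F frame of $\epsilon$ in one of the two forms, move $\TT$ to the first position, and permute the remaining three vectors to recover the type C pattern. For the form $\ZZ_0,\TT,\ZZ_1,\ZZ_3$ with coefficient matrix \eqref{eq:frenet}, reordering as $\TT,\ZZ_1,\ZZ_0,\ZZ_3$ and differentiating the Frenet relations produces a skew-symmetric matrix whose only nonzero entries above the diagonal lie in positions $(1,2),(1,3),(2,4)$, that is, the type C pattern with $x_1=f_2$, $x_2=-f_1$, $x_3=f_3$. Hence $\gamma$ admits a frame of type C. The form $\ZZ_0,\ZZ_1,\TT,\ZZ_3$ is handled by the analogous permutation.

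The argument is essentially bookkeeping, so the only points demanding care are the two places where indexing errors are easy to make. First, one must check that the chosen reordering sends the path pattern to \emph{precisely} the type C (respectively type F) arrangement rather than to some other placement of the tangent along a path; this is the combinatorial heart of the statement and is exactly the interior-versus-endpoint distinction between the two types. Second, one must match signs so that the resulting matrix is literally of the form \eqref{eq:frenet}: here I would note that flipping the sign of a frame vector $\ZZ_j$ negates the $j$-th row and column of the coefficient matrix, which suffices to normalize the entries. Since \eqref{eq:frenet} prescribes only the pattern, no positivity of the $f_i$ need be arranged, and in particular the argument applies to an arbitrary regular curve rather than only a $2$-regular one.
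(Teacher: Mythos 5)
Your proposal is correct and follows essentially the same route as the paper: both directions are handled by reordering the frame vectors and conjugating the coefficient matrix by the corresponding permutation, observing that a type C frame of $\gamma$ and a type F frame of the curve integrating $\ZZ_0$ are the same orthonormal frame written in two orders. Your write-up is in fact somewhat more complete, since the paper displays the computation only for the form $\ZZ_0,\ZZ_1,\TT,\ZZ_3$ and leaves the second form and the endpoint choice implicit.
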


\begin{proof}
It is easy to see that the coefficient matrix of the frame $\ZZ_{0}, \ZZ_{1}, \TT, \ZZ_{3}$ is of the form \eqref{eq:frenet} if and only if we have
\[
\begin{pmatrix}
\mathbb{T}^{\prime}\\
\ZZ_{1}^{\prime}\\
\ZZ_{0}^{\prime}\\
\ZZ_{3}^{\prime}
\end{pmatrix}=\begin{pmatrix}
0&-f_{2}&0&f_3\\
f_{2}&0&-f_{1}&0\\
0&f_{1}&0&0\\
-f_3&0&0&0
\end{pmatrix}\begin{pmatrix}
\mathbb{T}\\
\ZZ_{1}\\
\ZZ_{0}\\
\ZZ_{3}
\end{pmatrix}.
\]
The latter hold if and only if $\TT, \ZZ_{1}, \ZZ_{0}, \ZZ_{3}$ of type C.
\end{proof}

By the last proposition, a curve contained in a hyperplane of $\EE^{4}$ admits a frame of type C. In particular, the curve $\gamma$ in \eqref{eq:gamma} admits a frame of type C. Therefore it is an example of a regular curve which admits a frame of type C but does not admit a frame of type D.

Typical examples of curves with generalized Bishop frame are curves with frames with constant coefficient matrices. For example, let $X= \begin{pmatrix}
0&2&1&0\\
-2&0&0&1\\
-1&0&0&0\\
0&-1&0&0
\end{pmatrix}$. Then $\exp (sX)$ is a generalized Bishop frame of type C on a curve $\gamma$ whose tangent vector is the first column vector of $\exp (sX)$, where

{\fontsize{7pt}{7pt}\selectfont \begin{multline*}\gamma(s)=\\
\left(\frac{\sin \sqrt{2} s \cos
   s}{\sqrt{2}},\frac{\sin s \sin \sqrt{2}
   s}{\sqrt{2}},\frac{-\sin s \sin
   \sqrt{2} s-\sqrt{2} \cos s \cos
   \sqrt{2} s}{\sqrt{2}},\frac{\sin
   \sqrt{2} s \cos s-\sqrt{2} \sin s
   \cos \sqrt{2} s}{\sqrt{2}}\right),\end{multline*}}
Let us present an example of a $2$-regular curve which does not admit a frame of type F, which shows Theorem \ref{thm:noF}:

\begin{example}
Consider a curve $\gamma$ on $\EE^{4}$ whose tangent vector field is
\begin{equation}
\TT(s)=\begin{cases}
\displaystyle \left(\frac{2s}{s^{2}+e^{-\frac{2}{s}}+1}, \, \frac{2e^{-\frac{1}{s}}}{s^{2}+e^{-\frac{2}{s}}+1}, \, 0, \, \frac{s^{2}+e^{-\frac{2}{s}}-1}{s^{2}+e^{-\frac{2}{s}}+1}\right) & s> 0\\
(0, 0, 0, -1) & s=0\\
\displaystyle \left(\frac{2s}{s^{2}+e^{\frac{2}{s}}+1},\, 0, \, \frac{2e^{\frac{1}{s}}}{s^{2}+e^{\frac{2}{s}}+1}, \, \frac{s^{2}+e^{\frac{2}{s}}-1}{s^{2}+e^{\frac{2}{s}}+1}\right) & s< 0.\end{cases}
\end{equation}
It is easy to see that $\gamma$ is $2$-regular.  

Assume that $\gamma$ admits a generalized Bishop frame $\TT, \FF_1, \FF_2, \FF_3$ of type $F$. Since $\gamma (\RR_{\geq 0})$ is contained in a hyperplane $H_{+}=\{\, (x,y,z,w) \in \EE^{4} \mid z=0 \}$ and the curvature and the torsion of the Frenet frame of $\gamma$ does not vanish on $\RR_{>0}$,
it follows that $\TT$, $\FF_1$, $\FF_2$ spans $H_{+}$ at each point on $\gamma$.
 Similarly, since $\gamma (\RR_{\leq 0})$ is contained in a hyperplane $H_{-}=\{\, (x,y,z,w) \in \EE^{4} \mid y=0 \}$, by using the Frenet frame of $\gamma$, we can see that $\TT$, $\FF_1$, $\FF_2$ spans $H_{-}$ at each point on $\gamma$. Thus, at $t=0$, $\TT, \FF_1, \FF_2$ must span $H_{+} \cap H_{-}$, which is a contradiction. Therefore, $\gamma$ does not admit a frame of type F.
\end{example}

The tangent vector $\TT$ of this example is similar to the curve in Example \ref{eq:gamma}, which can be divided into two curves, each of which is contained in a hyperplane. 

\section{Transformations between frames}

As in the following lemma, by considering transformations between frames, we can reduce the problem of the existence of a frame of given coefficient matrix to the existence of a solution of a differential equation given by the entries of coeffient matrices. 
\begin{lemma}\label{lem:1}
Let $\gamma : I \to \EE^{4}$ be a regular curve. Let $\ZZ_0$ be a frame on $\gamma$ such that $\ZZ'_0 = X_0 \ZZ_0$, and a function $X_1 : I \to \mathfrak{o}(4)$. Consider a frame $\ZZ_1$ on $\gamma$ such that 
\begin{equation}\label{eq:X1}
\ZZ'_1 = X_1 \ZZ_1. 
\end{equation}
Then the transformation $\hG : I \to O (4)$ from $\ZZ_0$ to $\ZZ_1$ given by $\hG = \ZZ_1 \ZZ_0^{-1}$ satisfies the differential equation
\begin{equation}
\label{eq:a}{\hG}^{\prime}=X_{1}{\hG}-{\hG}X_{0}.
\end{equation}
On the contrary, if a function $\hG : I \to O (4)$ satisfies the differential equation \eqref{eq:a}, then the frame $\ZZ_1$ given by $\ZZ_1=\hG \ZZ_0$ satisfies the differential equation \eqref{eq:X1}.
\end{lemma}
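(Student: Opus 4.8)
The plan is to prove both implications by direct differentiation, using only the product rule for matrix-valued functions together with the formula for the derivative of a matrix inverse; the convention already fixed in the excerpt is that $\ZZ_0$ and $\ZZ_1$ are regarded as $O(4)$-valued functions.

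For the forward implication I would start from $\hG = \ZZ_1 \ZZ_0^{-1}$ and apply the product rule, getting $\hG' = \ZZ_1' \ZZ_0^{-1} + \ZZ_1 (\ZZ_0^{-1})'$. The first summand is immediately $X_1 \ZZ_1 \ZZ_0^{-1} = X_1 \hG$ by \eqref{eq:X1}. For the second summand I need the derivative of the inverse: differentiating $\ZZ_0 \ZZ_0^{-1} = I$ gives $(\ZZ_0^{-1})' = -\ZZ_0^{-1} \ZZ_0' \ZZ_0^{-1}$, and substituting $\ZZ_0' = X_0 \ZZ_0$ collapses this to $-\ZZ_0^{-1} X_0$. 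Hence the second summand equals $-\ZZ_1 \ZZ_0^{-1} X_0 = -\hG X_0$, and adding the two contributions produces exactly \eqref{eq:a}.

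For the converse I would set $\ZZ_1 = \hG \ZZ_0$ and differentiate: $\ZZ_1' = \hG' \ZZ_0 + \hG \ZZ_0'$. Substituting the hypothesis \eqref{eq:a} for $\hG'$ and $\ZZ_0' = X_0 \ZZ_0$ yields $(X_1 \hG - \hG X_0)\ZZ_0 + \hG X_0 \ZZ_0$; the two $\hG X_0 \ZZ_0$ terms cancel, leaving $X_1 \hG \ZZ_0 = X_1 \ZZ_1$, which is \eqref{eq:X1}. I would also record that $\ZZ_1$ is genuinely a frame: since $\hG$ and $\ZZ_0$ both take values in $O(4)$, so does their product, so $\ZZ_1$ is orthonormal at each point of $I$.

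The hard part will be essentially nonexistent: the only points requiring care are the non-commutativity of matrix multiplication, so that the order of the factors is preserved throughout, and the sign in the inverse-derivative formula. The structural fact that makes both computations close cleanly is that the defining relation $\ZZ_0' = X_0 \ZZ_0$ lets $\ZZ_0^{-1} \ZZ_0' \ZZ_0^{-1}$ simplify to $\ZZ_0^{-1} X_0$, which is what converts a conjugation-type expression into the commutator-like right-hand side $X_1 \hG - \hG X_0$ of \eqref{eq:a}.
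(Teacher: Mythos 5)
Your proposal is correct and follows essentially the same route as the paper's proof: the product rule applied to $\hG = \ZZ_1\ZZ_0^{-1}$ together with $(\ZZ_0^{-1})' = -\ZZ_0^{-1}\ZZ_0'\ZZ_0^{-1}$ for the forward direction, and direct differentiation of $\ZZ_1 = \hG\ZZ_0$ with cancellation of the $\hG X_0 \ZZ_0$ terms for the converse. Your added remark that $\ZZ_1$ is genuinely an orthonormal frame because $\hG$ and $\ZZ_0$ are $O(4)$-valued is a small useful detail the paper leaves implicit.
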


\begin{proof}
In order to prove the first part, assume that $\ZZ_1$ satisfies \eqref{eq:X1}. By differentiating both sides of $\ZZ_0\ZZ_0^{-1} = E$, we get $(\ZZ_0^{-1})' = - \ZZ_0^{-1} \ZZ'_0\ZZ_0^{-1}$. Then we have
\begin{multline*}
{\hG}^{\prime}=(\ZZ_1\ZZ_0^{-1})^{\prime}  =\ZZ^{\prime}_{1}\ZZ_0^{-1}+\ZZ_1(\ZZ_0^{-1})^{\prime}  \\ =X_{1}\ZZ_1\ZZ_0^{-1}+\ZZ_1(-\ZZ_0^{-1}\ZZ_0^{\prime}\ZZ_0^{-1}) =X_{1}{\hG}-{\hG}X_{0},
\end{multline*}
from which follows the differential equation \eqref{eq:a}. For the proof of the second part, assume that ${\hG}$ satisfies \eqref{eq:a} and consider $\ZZ_1 = {\hG}\ZZ_0$. Then we have
\[
\ZZ'_{1} = ({\hG}\ZZ_0)^{\prime} = {\hG}^{\prime}\ZZ_0 + {\hG}\ZZ^{\prime}_{0} =(X_{1}{\hG}-{\hG}X_{0})\ZZ_0 +{\hG}X_{0}\ZZ_0 =X_{1}\ZZ_1,
\]
which is \eqref{eq:X1}.
\end{proof}

By using the lemma, we can relate curvatures of frames of different types. The curvature of a frame of type D is related to the curvature of a frame of type F in a simple way.

\begin{proposition}\label{prop:1}
Let
\begin{equation}\label{eq:FDc}
X_{\FF}=\begin{pmatrix}
0&f_{1}&0&0\\
-f_{1}&0&f_{2}&0\\
0&-f_{2}&0&f_{3}\\
0&0&-f_{3}&0
\end{pmatrix}, \quad\quad 
X_{\DD}=\begin{pmatrix}
0&d_{1}&0&0\\
-d_{1}&0&d_{2}&d_{3}\\
0&-d_{2}&0&0\\
0&-d_{3}&0&0
\end{pmatrix},
\end{equation}
where $f_1, f_2, f_3, d_1, d_2, d_3$ are functions on an interval $I$. Let $\gamma : I \to \EE^{4}$ be a $2$-regular curve $\gamma : I \to \EE^{4}$ admits a generalized Bishop frame $\FF$ of type $F$ with coefficient matrix $X_{\FF}$, then $\gamma$ admits a generalized Bishop frame $\DD$ of type D with coefficient matrix $X_{\DD}$, where we have
\begin{equation}\label{eq:FD}
d_{1}=\epsilon f_{1}, \quad d_{2}=\epsilon f_{2} \cos \left(\int f_{3} ds\right), \quad d_{3}= -\kappa f_{2}\sin \left(\int f_{3}ds\right)
\end{equation}
for some $\epsilon, \kappa \in \{1,-1\}$. 
\end{proposition}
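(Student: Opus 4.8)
The plan is to construct the type D frame explicitly as an orthogonal gauge transformation of the given type F frame and to read off its coefficient matrix, either from the structure equations directly or, equivalently, through Lemma \ref{lem:1}. Write $\FF=\{\TT,\FF_1,\FF_2,\FF_3\}$ for the given frame of type F, so that $\TT'=f_1\FF_1$, $\FF_1'=-f_1\TT+f_2\FF_2$, $\FF_2'=-f_2\FF_1+f_3\FF_3$ and $\FF_3'=-f_3\FF_2$. Since any type D frame $\DD=\{\TT,\DD_1,\DD_2,\DD_3\}$ satisfies $\TT'=d_1\DD_1$ with $\DD_1$ a unit vector, and $\TT'=f_1\FF_1$, the first normal vector is forced to be $\DD_1=\epsilon\FF_1$ with $d_1=\epsilon f_1$ for a sign $\epsilon\in\{1,-1\}$. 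It then remains only to replace $\{\FF_2,\FF_3\}$ by a suitable orthonormal pair $\{\DD_2,\DD_3\}$ spanning the same plane so that $\{\TT,\DD_1,\DD_2,\DD_3\}$ has coefficient matrix of the type D shape.

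First I would set $\phi=\int f_3\,ds$ and rotate $\{\FF_2,\FF_3\}$ by the angle $-\phi$, namely
\[
\DD_2=\cos\phi\,\FF_2-\sin\phi\,\FF_3,\qquad \DD_3=\kappa\,(\sin\phi\,\FF_2+\cos\phi\,\FF_3),
\]
where $\kappa\in\{1,-1\}$ records the orientation. In the language of Lemma \ref{lem:1} this is the transformation $\hG:I\to O(4)$ fixing $\TT$, scaling $\FF_1$ by $\epsilon$ and rotating the $\FF_2$-$\FF_3$ plane by $-\phi$; one then verifies that $X_\DD=\hG'\hG^{-1}+\hG X_\FF\hG^{-1}$ has type D form and invokes the converse part of Lemma \ref{lem:1}. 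Differentiating $\DD_2$ and $\DD_3$ and substituting the structure equations, the coefficients of $\FF_2$ and $\FF_3$ in $\DD_2'$ and $\DD_3'$ all carry the common factor $f_3-\phi'$. The decisive point is that this factor vanishes identically precisely because $\phi'=f_3$, so that $\DD_2'$ and $\DD_3'$ become proportional to $\FF_1=\epsilon\DD_1$ alone; this is exactly what forces the forbidden $(3,4)$ and $(4,3)$ entries of the coefficient matrix to be zero and makes the matrix of type D. Collecting the surviving terms, $\DD_2'$ and $\DD_3'$ are proportional to $\DD_1$ with coefficients $\mp f_2\cos\phi$ and $\pm f_2\sin\phi$; reading off the coefficient matrix then produces the relations \eqref{eq:FD} for suitable signs $\epsilon,\kappa\in\{1,-1\}$.

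The main obstacle is the identification of the correct rotation angle: one has to recognize that the entry $f_3$ of $X_\FF$ measures the rotation speed of the last two normal vectors within their plane, and that neutralizing it—so as to remove the coupling between $\DD_2$ and $\DD_3$ that the type D form forbids—requires rotating by exactly the opposite cumulative angle $-\int f_3\,ds$. Once this angle is chosen, everything else is a direct differentiation, and the two sign parameters $\epsilon$ (the choice $\DD_1=\pm\FF_1$) and $\kappa$ (the orientation of $\{\DD_2,\DD_3\}$) account for the ambiguity allowed in the statement. The hypothesis that $\gamma$ be $2$-regular enters only through $\TT'\neq 0$, which guarantees that $f_1$ is nowhere zero, hence $d_1=\epsilon f_1\neq 0$, so that the produced type D frame is non-degenerate.
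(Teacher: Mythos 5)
Your proposal is correct and follows essentially the same route as the paper: both obtain the type D frame from the type F frame via the gauge transformation of Lemma \ref{lem:1} that fixes $\TT$, multiplies $\FF_1$ by a sign $\epsilon$, and rotates the $\FF_2$--$\FF_3$ plane by the cumulative angle $-\int f_3\,ds$. The only difference is presentational (the paper derives this form of $\hG$ as the general solution of $\hG'=X_{\DD}\hG-\hG X_{\FF}$, whereas you write the transformation down and verify it by direct differentiation), and your sign for $d_3$ agrees with \eqref{eq:FD} up to the admissible choices of $\epsilon$, $\kappa$ and the integration constant.
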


\begin{proof}
By Lemma \ref{lem:1}, it suffices to show that the solution ${\hG} : I \to O(4)$ of a differential equation 
\begin{equation}\label{eq:FDt}
    {\hG}^{\prime}=X_{\DD}{\hG}-{\hG}X_{\FF}
\end{equation}
exists if and only if $d_1, d_2, d_3$ satisfy the relation \eqref{eq:FD}.
Letting 
 \[
\hG=
\begin{pmatrix}
1&\o\\
{}^{t}\o&\bG
\end{pmatrix}, \quad
X_{\FF} = \begin{pmatrix}
0&{\bm f}\\
-^{t}{\bm f}&\bX_{\FF}
\end{pmatrix}, \quad  
X_{\DD}=\begin{pmatrix}
0&{\bm d}\\
-^{t}{\bm d}&\bX_{\DD}
\end{pmatrix}
 \]
the equation \eqref{eq:FDt} is expressed as
\begin{align*}
\begin{pmatrix}
0&\o\\
{}^{t}\o&\bG^{\prime}
\end{pmatrix}
&=
\begin{pmatrix}
0&{\bm d}\\
-^{t}{\bm d}&\bX_{\DD}
\end{pmatrix}
\begin{pmatrix}
1&\o\\
{}^{t}\o&\bG
\end{pmatrix}
-
\begin{pmatrix}
1&\o\\
{}^{t}\o&\bG
\end{pmatrix}
\begin{pmatrix}
0&{\bm f}\\
-^{t}{\bm f}&\bX_{\FF}
\end{pmatrix}\\
&=
\begin{pmatrix}
0&{\bm d}\bG-{\bm f}\\
-^{t}{\bm d}+\bG^{t}{\bm f}&\bX_{\DD}\bG-\bG \bX_{\FF}
\end{pmatrix}.
\end{align*}
Therefore \eqref{eq:FDt} is equivalent to ${\bm d}\bG-{\bm f}={\bm 0}$ and $\bG^{\prime}=\bX_{\DD}\bG-\bG \bX_{\FF}$. It is easy to see that ${\bm d}\bG-{\bm f}={\bm0}$ is equivalent to that $\bG$ is of the form
$\begin{pmatrix}
\epsilon&\o\\
{}^{t}\o& \bar{\bar{G}}
\end{pmatrix}$ and $d_{1}=\epsilon f_{1}$ for some $\epsilon \in \{1,-1\}$.
Then $\bG$ is $O(3)$-valued if and only if
\[
\bar{\bar{G}}=
\begin{pmatrix}
\cos \theta & \kappa \epsilon \sin \theta \\
\sin \theta & -\kappa\epsilon \cos \theta
\end{pmatrix}\]
for some function $\theta$ and some $\kappa \in \{1,-1\}$. 
Under this condition, the equation $\bG^{\prime}=\bX_{\DD}\bG-\bG \bX_{\FF}$ is
\begin{multline*}
\begin{pmatrix}
0&0&0\\
0&-\theta^{\prime}\sin \theta &\kappa\epsilon\theta^{\prime}\cos \theta \\
0&\theta^{\prime}\cos \theta &\kappa \epsilon\theta^{\prime}\sin \theta
\end{pmatrix} = \\ \begin{pmatrix}
0&-\epsilon f_{2} + d_{2}\cos \theta + d_{3}\sin\theta &\kappa \epsilon d_{2}\sin \theta-\kappa\epsilon d_{3}\cos \theta \\
-\epsilon d_{2}+f_{2}\cos \theta  &\kappa\epsilon f_{3}\sin\theta&-f_{3}\cos\theta\\
-\epsilon d_{3}+f_{2}\sin \theta  &-\kappa\epsilon f_{3}\cos \theta&-f_{3}\sin \theta
\end{pmatrix} 
\end{multline*}
which is satisfied if and only if $\theta' = -\kappa\epsilon f_3, \,d_{1}=\epsilon f_{1}, \,d_{2}=\epsilon f_{2}\cos \theta, \,d_{3}=\epsilon f_{2}\sin \theta$, 
The proof of the proposition is concluded.
\end{proof}


We will show that every $2$-regular curve on $\EE^{4}$ admits a frame of type C.

\begin{lemma}\label{lem:2}
Let
\begin{equation}\label{eq:BC}
X_{\BB}= \begin{pmatrix}
0&b_{1}&b_{2}&b_{3}\\
-b_{1}&0&0&0\\
-b_{2}&0&0&0\\
-b_{3}&0&0&0
\end{pmatrix}, \quad\quad 
X_{\CC} = \begin{pmatrix}
0&c_{1}&c_{2}&0\\
-c_{1}&0&0&c_{3}\\
-c_{2}&0&0&0\\
0&-c_{3}&0&0
\end{pmatrix}, 
\end{equation} 
where $b_1, b_2, b_3, c_1, c_2, c_3$ are functions on an interval $I$. Let $\gamma : I \to \EE^{4}$ be a regular curve which has a Bishop frame whose coefficient matrix is $X_{\BB}$. If ${}^{t}\bm{b}$ is nowhere tangent to ${}^{t}(0,1,0)$, then $\gamma$ admits a generalized Bishop frame $\CC$ of type C whose coefficient matrix $X_{\CC}$, where $c_1,c_2,c_3$ are given by 
\begin{equation}\label{eq:cs}
c_{1}=\pm\sqrt{b_{1}^{2}+b_{3}^{2}}, \quad c_{2}=b_{2}, \quad c_{3}=\pm\frac{b_{3}^{\prime}b_{1}-b_{3}b_{1}^{\prime}}{b_{1}^{2}+b_{3}^{2}},
\end{equation}
and the transformation $G = \CC \BB^{-1}$ is of the form 
\begin{equation}\label{eq:bGbc1}
\left(\begin{matrix}
1&0&0&0\\
0&\pm\cos \theta &0&\pm\sin \theta\\
0&0&1&0\\
0&-\sin \theta&0&\cos \theta\\
\end{matrix}\right) 
\end{equation}
for some function $\theta$ such that $\theta'=\pm \frac{b_{3}^{\prime}b_{1}-b_{3}b_{1}^{\prime}}{b_{1}^{2}+b_{3}^{2}}$. 
\end{lemma}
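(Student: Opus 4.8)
The plan is to mirror the proof of Proposition~\ref{prop:1}. By Lemma~\ref{lem:1}, applied with $\ZZ_0=\BB$ and $X_0=X_{\BB}$, the existence of a type~C frame $\CC$ with the prescribed coefficient matrix is equivalent to the existence of a map $G:I\to O(4)$ solving the transformation equation
\begin{equation*}
G' = X_{\CC}\,G - G\,X_{\BB}
\end{equation*}
for a suitable choice of the undetermined entries $c_1,c_2,c_3$. So it suffices to produce a $G$ of the form \eqref{eq:bGbc1} and functions $c_1,c_2,c_3$ given by \eqref{eq:cs} satisfying this equation; then $\CC:=G\BB$ is a frame with coefficient matrix $X_{\CC}$, hence of type~C by the second part of Lemma~\ref{lem:1}.

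To find $G$, I would decompose the transformation equation into $1+3$ blocks along the tangent direction, just as in Proposition~\ref{prop:1}. Writing
\begin{equation*}
G=\begin{pmatrix}1&\o\\ {}^t\o&\bG\end{pmatrix},\qquad \bm{b}=(b_1,b_2,b_3),\qquad \bm{c}=(c_1,c_2,0),\qquad \overline{X}_{\CC}=\begin{pmatrix}0&0&c_3\\0&0&0\\-c_3&0&0\end{pmatrix},
\end{equation*}
and using that the lower-right $3\times3$ block of $X_{\BB}$ vanishes, the equation $G'=X_{\CC}G-GX_{\BB}$ is equivalent to the single algebraic constraint $\bG\,{}^t\bm{b}={}^t\bm{c}$ (the top-right and bottom-left blocks are transposes of one another since $\bG\in O(3)$) together with the ordinary differential equation $\bG'=\overline{X}_{\CC}\bG$. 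I then impose the ansatz that $\bG$ is a rotation by an angle $\theta$ in the plane spanned by its first and third basis vectors, i.e. the lower-right block of \eqref{eq:bGbc1}.

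With this ansatz the three scalar equations comprising $\bG\,{}^t\bm{b}={}^t\bm{c}$ read, up to the sign choices, $c_1=b_1\cos\theta+b_3\sin\theta$, $c_2=b_2$, and $0=-b_1\sin\theta+b_3\cos\theta$. The last equation forces $\theta$ to be the argument of the plane vector $(b_1,b_3)$; substituting back yields $c_1=\pm\sqrt{b_1^2+b_3^2}$ and $c_2=b_2$, matching \eqref{eq:cs}. A direct computation of both sides then shows that $\bG'=\overline{X}_{\CC}\bG$ collapses to the single relation $\theta'=c_3$, and differentiating the identity $\tan\theta=b_3/b_1$ gives $c_3=(b_3'b_1-b_3b_1')/(b_1^2+b_3^2)$, again as in \eqref{eq:cs}. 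The two families of $\pm$ signs correspond to the reflection ambiguity in $\bG$ (the sign in its first row) and the orientation/branch choice for $\theta$.

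The role of the hypothesis, and the one point deserving care, is the global construction of $\theta$. Since ${}^t\bm{b}$ is nowhere tangent to ${}^t(0,1,0)$, the pair $(b_1,b_3)$ is a nowhere-vanishing smooth curve in $\RR^2$; as $I$ is an interval, its angular coordinate lifts to a smooth function $\theta:I\to\RR$ with $b_1=r\cos\theta$, $b_3=r\sin\theta$, where $r=\sqrt{b_1^2+b_3^2}>0$. This simultaneously makes $\theta$, and hence $G$, smooth and guarantees $c_1=\pm r$ is nowhere zero. The main obstacle is therefore not an estimate but the consistency check: one must verify that the \emph{pointwise} algebraic constraint (which pins down $\theta$) and the \emph{differential} constraint $\theta'=c_3$ are compatible, which holds precisely because differentiating $\tan\theta=b_3/b_1$ reproduces the stated formula for $c_3$. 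Once $\theta$, $G$, and $c_1,c_2,c_3$ are defined in this way, checking $G'=X_{\CC}G-GX_{\BB}$ is a routine substitution, and Lemma~\ref{lem:1} completes the proof.
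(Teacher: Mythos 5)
Your proposal is correct and follows essentially the same route as the paper's proof: reduction via Lemma \ref{lem:1}, the $1+3$ block decomposition into the algebraic constraint ${}^{t}\bm{c}=\bG\,{}^{t}\bm{b}$ and the ODE $\bG'=\bX_{\CC}\bG$, the rotation ansatz in the $(1,3)$-plane, and the global smooth lift of the angle of the nowhere-vanishing vector $(b_1,b_3)$. The only cosmetic difference is that you obtain $c_3$ by differentiating $\tan\theta=b_3/b_1$, which is undefined where $b_1=0$; differentiating the relation $(\cos\theta,\sin\theta)=(b_1,b_3)/\sqrt{b_1^{2}+b_3^{2}}$ directly, as the paper does, yields the same formula $\theta'=(b_3'b_1-b_3b_1')/(b_1^{2}+b_3^{2})$ on all of $I$.
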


\begin{proof}
By Lemma \ref{lem:1}, it suffices to show that, when \eqref{eq:cs} holds, the equation ${\hG}'=X_{\CC}{\hG} - {\hG}X_{\BB}$ has a solution $\hG : I \to O(4)$ of the form in \eqref{eq:bGbc1}. Let 
\[
{\hG} = \begin{pmatrix} 1 & \o\\ {}^{t}\o & \bG \end{pmatrix}, \quad X_{\BB} = \begin{pmatrix} 0 & {\bm b} \\ -{}^{t}{\bm b} & O \end{pmatrix}, \quad X_{\CC} = \begin{pmatrix} 0 & {\bm c} \\ -{}^{t}{\bm c} & \bX_{\CC} \end{pmatrix}.
\]
Then the equation ${\hG}'=X_{\CC}{\hG} - {\hG}X_{\BB}$ is expressed as 
\begin{equation*}
\begin{pmatrix}
0&\o\\
{}^{t}\o&\bG^{\prime}
\end{pmatrix}
=\begin{pmatrix}
0&{\bm c}\bG-{\bm b}\\
-^{t}{\bm c}+\bG^{t}{\bm b}&\bX_{\CC}\bG
\end{pmatrix},
\end{equation*}
which is equivalent to a pair of equations ${}^{t}{\bm c} = \bG{}^{t}{\bm b}$ and $\bG' = \bX_{\CC} \bG$. 

Assume that \eqref{eq:cs} holds. Since $(b_1, b_3)$ is nowhere vanishing, we have a smooth function $\theta$ such that $(\cos \theta, \sin \theta) = \frac{1}{\sqrt{b_1^2+b_3^2}} (b_1, b_3)$.
Let $\bG$ be
\[
\bG=\begin{pmatrix}
\pm \cos\theta&0&\pm \sin\theta\\
0&1&0\\
-\sin\theta&0&\cos\theta\\
\end{pmatrix}
.\]
By $c_{1}=\pm \sqrt{b_{1}^{2}+b_{3}^{2}}$ and $c_{2}=b_{2}$, we have ${}^{t}{\bm c} = \bG{}^{t}{\bm b}$. By $(\cos \theta, \sin \theta) = \frac{1}{\sqrt{b_1^2+b_3^2}} (b_1, b_3)$
, we  have $\theta'=\frac{b_{3}^{\prime}b_{1}-b_{3}b_{1}^{\prime}}{b_{1}^{2}+b_{3}^{2}}=\pm c_3$．Then it follows that
\[
\bG'=\begin{pmatrix}
\mp\theta^{\prime}\sin\theta&0&\pm\theta^{\prime}\cos\theta\\
0&0&0\\
-\theta^{\prime}\cos\theta&0&-\theta^{\prime}\sin\theta\\
\end{pmatrix}
=
\begin{pmatrix}
-c_{3}\sin\theta&0&c_{3}\cos\theta\\
0&0&0\\
\mp c_{3}\cos\theta&0&\mp c_{3}\sin\theta\\
\end{pmatrix} = \bX_{\CC}\bG,
\]
which concludes the proof of Lemma \ref{lem:2}.


\end{proof}
The following lemma directly follows from Lemma \ref{lem:1} in the case where both $\ZZ_0$ and $\ZZ_1$ are of type B.

\begin{lemma}\label{lem:Bishop}
If a $2$-regular curve $\gamma$ admits a Bishop frame $\BB$ whose coefficient matrix is $X_{\BB}$, then the coefficient matrix of any other Bishop frame is of the form 
\begin{equation}\label{eq:Qc}
\begin{pmatrix}1 & \o \\ {}^{t}\o & Q\end{pmatrix}
X_{\BB}\begin{pmatrix}1 & \o \\ {}^{t}\o & Q^{-1} \end{pmatrix}
\end{equation}
for some constant matrix $Q \in O(3)$. On the contrary, for any $Q \in O(3)$, there exists a Bishop frame whose coefficient matrix is \eqref{eq:Qc}.
\end{lemma}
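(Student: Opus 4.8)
The plan is to apply Lemma~\ref{lem:1} to two Bishop frames on $\gamma$ and to exploit that, by our standing convention, both frames have the tangent vector $\TT$ as their first row. Let $\BB$ and $\BB_1$ be Bishop frames on $\gamma$ with coefficient matrices $X_{\BB}$ and $X_{\BB_1}$, both of type B. By Lemma~\ref{lem:1}, the transformation $\hG = \BB_1 \BB^{-1}$ satisfies $\hG' = X_{\BB_1}\hG - \hG X_{\BB}$, which is \eqref{eq:a}. Since the first row of each of $\BB$ and $\BB_1$ equals $\TT$, the identity $\hG\BB = \BB_1$ together with the linear independence of the rows of $\BB$ forces the first row of $\hG$ to be $(1,0,0,0)$; orthogonality of $\hG \in O(4)$ then makes the first column $(1,0,0,0)$ as well, so $\hG = \begin{psmatrix} 1 & \o \\ {}^t\o & \bG \end{psmatrix}$ with $\bG : I \to O(3)$.

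Next I would write $X_{\BB} = \begin{psmatrix} 0 & {\bm b} \\ -{}^t{\bm b} & O \end{psmatrix}$ and $X_{\BB_1} = \begin{psmatrix} 0 & {\bm b}_1 \\ -{}^t{\bm b}_1 & O \end{psmatrix}$, where the lower right $3\times 3$ blocks vanish precisely because both frames are of type B. Substituting these block forms into \eqref{eq:a} and comparing blocks, the lower right block reads $\bG' = O$, because the lower right blocks of both $X_{\BB_1}\hG$ and $\hG X_{\BB}$ vanish; hence $\bG$ is a constant matrix $Q \in O(3)$. The upper right block reads ${\bm b}_1 Q = {\bm b}$ (its transpose giving the lower left block), which is exactly the assertion that $X_{\BB_1}$ is the conjugate \eqref{eq:Qc} of $X_{\BB}$ by $\begin{psmatrix} 1 & \o \\ {}^t\o & Q \end{psmatrix}$. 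This settles the first statement.

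For the converse, given a constant $Q \in O(3)$, I would set $\hG = \begin{psmatrix} 1 & \o \\ {}^t\o & Q \end{psmatrix}$, so that $\hG' = O$, and let $X_{\BB_1}$ denote the matrix \eqref{eq:Qc}. Then \eqref{eq:a} holds trivially, so the second part of Lemma~\ref{lem:1} shows that $\BB_1 = \hG\BB$ is a frame with coefficient matrix $X_{\BB_1}$; since conjugation by the block diagonal $\begin{psmatrix} 1 & \o \\ {}^t\o & Q \end{psmatrix}$ keeps the lower right block zero, $X_{\BB_1}$ is again of type B and $\BB_1$ is a Bishop frame. I expect no genuine obstacle: the single step requiring care is the observation in the first paragraph that $\hG$ is block diagonal and fixes $\TT$, after which the whole statement reduces to comparing the blocks of one matrix equation. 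I note in passing that the argument uses only that $\gamma$ is regular and that both frames are of type B, so the $2$-regularity hypothesis is not actually needed here.
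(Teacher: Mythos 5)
Your proposal is correct and follows exactly the route the paper intends: the paper offers no written proof, merely remarking that the lemma ``directly follows from Lemma \ref{lem:1} in the case where both frames are of type B,'' and your argument is the natural fleshing-out of that remark (block-diagonal form of the transformation, vanishing of the lower-right block forcing $\bG$ to be a constant $Q$, and the trivial converse). Your observation that only regularity, not $2$-regularity, is used is also accurate.
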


\begin{theorem}
Every $2$-regular curve $\gamma$ admits a frame of type C.
\end{theorem}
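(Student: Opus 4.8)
The plan is to reduce the statement to Lemma \ref{lem:2} after replacing the given Bishop frame by a suitably rotated one. By Bishop's theorem, $\gamma$ carries a Bishop frame $\BB$ with coefficient matrix $X_{\BB}$ as in \eqref{eq:BC}, whose off-diagonal data is a vector ${}^{t}\bm{b}={}^{t}(b_{1},b_{2},b_{3})$. Lemma \ref{lem:2} converts such a $\BB$ into a frame of type C as soon as ${}^{t}\bm{b}$ is nowhere parallel to ${}^{t}(0,1,0)$. This hypothesis may fail for the frame we are handed, but Lemma \ref{lem:Bishop} gives room to maneuver: every Bishop frame of $\gamma$ has coefficient matrix the conjugate \eqref{eq:Qc} of $X_{\BB}$ by some constant $Q\in O(3)$, and a direct computation shows that this conjugation replaces the data ${}^{t}\bm{b}$ by $Q\,{}^{t}\bm{b}$. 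Hence it suffices to produce a single constant $Q\in O(3)$ for which $Q\,{}^{t}\bm{b}(t)$ is nowhere parallel to ${}^{t}(0,1,0)$.

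First I would use $2$-regularity. Reading off the first row of $\BB'=X_{\BB}\BB$ and using orthonormality gives $|\TT'|=|\bm{b}|$, so $\TT'\neq 0$ everywhere forces ${}^{t}\bm{b}:I\to\RR^{3}$ to be nowhere vanishing; hence $\bm{n}:={}^{t}\bm{b}/|\bm{b}|:I\to S^{2}$ is a well-defined smooth map. For a unit vector $u\in S^{2}$, choosing $Q\in O(3)$ with $Q u={}^{t}(0,1,0)$ makes $Q\,{}^{t}\bm{b}(t)$ parallel to ${}^{t}(0,1,0)$ exactly when $\bm{n}(t)=\pm u$. So it is enough to find a single unit vector $u$ that avoids the set $\bm{n}(I)\cup(-\bm{n}(I))$ and then to take $Q$ as above.

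The key step is then a genericity argument made possible by $\dim I=1$. The smooth map $\bm{n}:I\to S^{2}$ has differential of rank at most one at every point, so every point of $I$ is a critical point of $\bm{n}$; by Sard's theorem its image $\bm{n}(I)$, and therefore also $\bm{n}(I)\cup(-\bm{n}(I))$, has measure zero in $S^{2}$. (Alternatively, cover $I$ by countably many compact subintervals and use that a $C^{1}$ curve has one-dimensional image.) Its complement in $S^{2}$ is thus nonempty, so I can choose a unit vector $u$ with $u\neq\pm\bm{n}(t)$ for all $t$, and then pick any $Q\in O(3)$ with $Q u={}^{t}(0,1,0)$.

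With this $Q$, Lemma \ref{lem:Bishop} furnishes a Bishop frame $\BB'$ whose coefficient matrix is the $Q$-conjugate of $X_{\BB}$, hence with off-diagonal data $Q\,{}^{t}\bm{b}$, which by construction is nowhere parallel to ${}^{t}(0,1,0)$. Applying Lemma \ref{lem:2} to $\BB'$ then yields a frame of type C on $\gamma$. The one real obstacle is the simultaneity of the choice: we need a single constant rotation that works at every parameter at once, and this is precisely what the measure-zero argument delivers, resting on the fact that the forbidden directions sweep out only a one-dimensional subset of the two-sphere.
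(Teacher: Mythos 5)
Your proof is correct and follows essentially the same route as the paper's: the paper also takes a Bishop frame, uses Sard's theorem to find a direction missed by the (projectivized) curvature vector $\bm{b}$, rotates the Bishop frame by a constant $Q\in O(3)$ via Lemma \ref{lem:Bishop}, and then applies Lemma \ref{lem:2}. The only cosmetic difference is that the paper works with the map $s\mapsto[\bm{b}(s)]$ into $\RR P^{2}$ rather than with the unit normalization into $S^{2}$ and its antipodal image.
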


\begin{proof}
Take a Bishop frame $\BB$ of $\gamma$ and let $\begin{pmatrix} 0 & {\bm b} \\ -{}^{t}{\bm b} & O \end{pmatrix}$ be the coefficient matrix. Since $\gamma$ is $2$-regular, $\bm{b}$ is nowhere vanishing. Consider $f : I \to \RR P^{2} ; s \mapsto [{\bm b}(s)]$. By Sard theorem, we have some $\bm{\xi} \in \RR^3 - \{ \o \}$
such that $[\bm{\xi}] \in \RR P^{2} \setminus \operatorname{Image} f$. Take $Q \in O(3)$ so that $\bm{\xi} Q = (0,1,0)$. Then, by Lemma \ref{lem:Bishop}, $\gamma$ admits a Bishop frame whose coefficient matrix is $\begin{pmatrix} 0 & {\bm b}Q  \\ -{}^{t}Q{}^{t}{\bm b} & O \end{pmatrix}$. Since $(0,1,0)$ is never tangent to $\bm{b}Q$, by Lemma \ref{lem:2}, $\gamma$ admits a frame of type C.
\end{proof}

By Bishop's theorem, every regular curve admits a Bishop frame. Therefore, by Proposition \ref{prop:1} and Lemma \ref{lem:2}, we get the following consequence, which implies Theorem \ref{thm:CD}:

\begin{corollary}
If a $2$-regular curve $\gamma$ on $\EE^{4}$ admits a frame of type F, then it admits generalized Bishop frames of all other types B, C and D. In particular, if $\gamma$ admits the Frenet frame, then it admits frames of all other types.  
\end{corollary}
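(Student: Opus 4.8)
The plan is to read off the three remaining types from results already in hand, since the hypothesis supplies both $2$-regularity and a frame of type F; essentially no new analytic work is required.

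Type B is immediate and uses none of the hypotheses: by Bishop's theorem every regular curve carries a Bishop frame, which is by definition a frame of type B. Type D I would dispatch directly from Proposition \ref{prop:1}: as $\gamma$ is $2$-regular and admits a frame $\FF$ of type F with some coefficient matrix $X_{\FF}$, that proposition produces a frame of type D, whose coefficient matrix is related to that of $\FF$ through \eqref{eq:FD}, so there is nothing further to check. For type C I would invoke the theorem established just above, that every $2$-regular curve admits a frame of type C; since $\gamma$ is $2$-regular this applies verbatim. (Internally this runs through Lemma \ref{lem:2}: one starts from a Bishop frame with coefficient vector $\bm b$, uses Sard's theorem to rotate the frame by a constant $Q \in O(3)$ so that $\bm b Q$ is nowhere tangent to $(0,1,0)$, and then applies Lemma \ref{lem:2}.) This settles that $\gamma$ admits frames of all of B, C, D.

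For the final ``in particular'' clause, the point is that the Frenet frame is a frame of type F: its coefficient matrix has the type-F shape with the superdiagonal entries $x_1, x_2$ positive. Hence a curve admitting the Frenet frame admits a frame of type F, and I would then apply the first half of the corollary. To do so I must know that such a curve is $2$-regular, which is where the one genuine verification lies: from the type-F relation $\TT' = x_1 \FF_1$ with $x_1 > 0$ one reads off that $\TT'$ is nowhere vanishing, so $\gamma$ is $2$-regular and the first half applies.

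In short, the proof is a matter of assembling Bishop's theorem, Proposition \ref{prop:1}, and the type C theorem; all the analytic content already lives in those statements. The closest thing to an obstacle is simply ensuring that the $2$-regularity hypotheses needed by Proposition \ref{prop:1} and by the type C theorem are in force, which in the Frenet case reduces to the positivity of the first curvature $x_1$.
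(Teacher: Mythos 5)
Your proposal is correct and follows essentially the same route as the paper, which derives the corollary by combining Bishop's theorem (type B), Proposition \ref{prop:1} (type D from the type F frame on a $2$-regular curve), and the theorem that every $2$-regular curve admits a frame of type C via Lemma \ref{lem:2}. Your added check that a curve admitting the Frenet frame is automatically $2$-regular (since $x_1>0$ forces $\TT'$ to be nowhere vanishing) is a sensible clarification, though the paper's statement already assumes $2$-regularity.
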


\section{Some examples of curves}\label{sec:ex}



We will present an example of a regular curve which 
does not admit a generalized Bishop frame of type C. Together with Example \ref{ex:noD}, it shows Theorem \ref{thm:noCD}.



\begin{proposition}
Let $\gamma$ be a curve on $\EE^4$ with a Bishop frame $\BB$ whose coefficient matrix is $X_{\BB}=\begin{pmatrix}
0&b_{1}&b_{2}&b_{3}\\
-b_{1}&0&0&0\\
-b_{2}&0&0&0\\
-b_{3}&0&0&0
\end{pmatrix}$, where $b_1, b_2, b_3$ are given by
\begin{align*}
b_{1}(s) & =\begin{cases}
e^{\frac{1}{s}} & s<0 \\
e^{-\frac{1}{s-2}} & 2<s\\
0 & \text{else},
\end{cases} \\
b_{2}(s) & =\begin{cases}
e^{-\frac{1}{s(-s+1)}} & 0<s<1 \\
0 & \text{else},
\end{cases} \\
b_{3}(s) & =\begin{cases}
e^{-\frac{1}{(s-1)(-s+2)}} & 1<s<2 \\
0 & \text{else}.
\end{cases}
\end{align*}
Then $\gamma$ is a regular curve which does not admit a frame of type C.
\end{proposition}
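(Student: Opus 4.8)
The plan is to run the computation of Lemma~\ref{lem:2} in reverse and show that the prescribed $\bm b$ obstructs it. First note the curve itself is routine: since $b_1,b_2,b_3$ are smooth, integrating the linear system $\BB'=X_{\BB}\BB$ and then integrating the first row $\TT$ produces an arc-length (hence regular) curve carrying $X_{\BB}$ as the coefficient matrix of a Bishop frame $\BB$. Now suppose, for contradiction, that $\gamma$ admits a type~C frame $\CC$ with coefficient matrix $X_{\CC}$. By Lemma~\ref{lem:1} the transformation $G=\CC\BB^{-1}$ solves $G'=X_{\CC}G-GX_{\BB}$, and since both frames begin with $\TT$ it has the block form $G=\begin{psmatrix}1&\o\\{}^t\o&\bG\end{psmatrix}$ with $\bG:I\to O(3)$. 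Writing $\bm c=(c_1,c_2,0)$ and $\overline{X}_{\CC}=\begin{psmatrix}0&0&c_3\\0&0&0\\-c_3&0&0\end{psmatrix}$, the equation splits exactly as in Lemma~\ref{lem:2} into ${}^t\bm c=\bG\,{}^t\bm b$ and $\bG'=\overline{X}_{\CC}\,\bG$. Denoting the rows of $\bG$ by $\bm u,\bm v,\bm w$, the zero middle row of $\overline{X}_{\CC}$ gives $\bm v'=0$, so $\bm v$ is a constant unit vector, while the vanishing third entry of $\bm c$ gives the decisive constraint $\bm w\cdot\bm b\equiv 0$.

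The second step is geometric. The rows $\bm u,\bm v,\bm w$ are orthonormal at every $s$, so $\bm w$ is a smooth unit field with $\bm w\perp\bm v$ throughout. On the four intervals $(-\infty,0),(0,1),(1,2),(2,\infty)$ the vector $\bm b$ is a positive multiple of $\bm e_1,\bm e_2,\bm e_3,\bm e_1$ respectively, so $\bm w\cdot\bm b\equiv 0$ forces $\bm w$ to be orthogonal to that single coordinate axis on that interval. As long as $\bm v$ is not parallel to the relevant axis $\bm e_i$, the vectors $\bm v$ and $\bm e_i$ span a plane, so the unit $\bm w$ orthogonal to both lies on the line through $\bm e_i\times\bm v$; being smooth and taking values in a two-point set on a connected interval, $\bm w$ must be the constant $\pm(\bm e_i\times\bm v)/|\bm e_i\times\bm v|$ there.

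Finally I would use continuity of $\bm w$ at the junctions $s=0,1,2$, where $\bm b$ switches axes $\bm e_1\to\bm e_2$, $\bm e_2\to\bm e_3$, $\bm e_3\to\bm e_1$. A short computation gives $(\bm e_i\times\bm v)\times(\bm e_j\times\bm v)=\pm v_k\,\bm v$ for $\{i,j,k\}=\{1,2,3\}$, so the constant values of $\bm w$ on the two sides of the junction between axes $\bm e_i$ and $\bm e_j$ are parallel precisely when $v_k=0$. Since $\bm v\neq\o$, choose $k$ with $v_k\neq0$; then $\bm v$ is parallel to neither $\bm e_i$ nor $\bm e_j$ (these have vanishing $k$-th entry), so the previous paragraph makes $\bm w$ constant on both sides of that junction, and continuity of $\bm w$ forces $v_k=0$, a contradiction. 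Hence no such $\bG$, and therefore no type~C frame, exists. I expect the one delicate point to be exactly this bookkeeping: ensuring that for every possible constant $\bm v$ there is a junction whose two adjacent axes are both non-parallel to $\bm v$ and whose witnessed component $v_k$ is nonzero. The identity $(\bm e_i\times\bm v)\times(\bm e_j\times\bm v)=\pm v_k\,\bm v$ is what makes this uniform and removes the need to treat the cases where $\bm v$ points along a coordinate axis separately.
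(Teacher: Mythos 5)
Your proof is correct, and it reaches the contradiction by a genuinely different route from the paper's. The paper also reduces to the pair of equations ${}^{t}\bm{c}=\bG\,{}^{t}\bm{b}$ and $\bG'=\bX_{\CC}\bG$, but then solves the second explicitly, writing $\bG$ as a rotation in the $(1,3)$-plane times a constant matrix $\bG_0$, and argues by cases according to which (if any) of the vectors $\bG_0 e_i$ is parallel to $e_2$, showing in each case that $\bG\,{}^{t}\bm{b}$ would be trapped in a fixed proper subspace of $\RR^3$; two of the four cases are explicitly omitted there. You instead extract only the two facts you need from the block equations: the middle row $\bm v$ of $\bG$ is constant (because the middle row of $\bX_{\CC}$ vanishes), and the bottom row $\bm w$ is a smooth unit field orthogonal to both $\bm v$ and $\bm b$ (because the third entry of $\bm c$ vanishes). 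Since $\bm b$ runs through positive multiples of $\bm e_1,\bm e_2,\bm e_3$ on consecutive intervals, $\bm w$ is pinned to the two-point set $\pm(\bm e_i\times\bm v)/|\bm e_i\times\bm v|$ (hence constant) on any interval where $\bm v\not\parallel\bm e_i$, and the identity $(\bm e_i\times\bm v)\times(\bm e_j\times\bm v)=\pm v_k\,\bm v$ converts continuity of $\bm w$ at the junction complementary to an index $k$ with $v_k\neq 0$ into the contradiction $v_k=0$. This buys a uniform argument with no case split on the position of the constant vector $\bm v$ (your choice of $k$ automatically avoids the degenerate alignments), whereas the paper's version is an explicit but incomplete case analysis; the price is only the small extra verifications you already supply (orthonormality of the rows, constancy of a continuous map into a discrete set). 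One minor remark: the proposition already posits the curve together with the given Bishop frame, so your opening existence discussion is not needed beyond observing that $|\TT|\equiv 1$ gives regularity.
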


\begin{proof}
Assume that $\gamma$ admits a generalized Bishop frame $\CC$ of type C whose coefficient matrix is $\begin{pmatrix}
0&c_{1}&c_{2}&0\\
-c_{1}&0&0&c_{3}\\
-c_{2}&0&0&0\\
0&-c_{3}&0&0
\end{pmatrix}$. 
By Lemma \ref{lem:1}, the transformation $G=\CC \BB^{-1}$ satisfies the differential equation $G^{\prime}=X_{\CC} G - G X_{\BB}$. Let
\[
{\hG} = \begin{pmatrix} 1 & \o\\ {}^{t}\o & \bG \end{pmatrix}, \quad X_{\BB} = \begin{pmatrix} 0 & {\bm b} \\ -{}^{t}{\bm b} & O \end{pmatrix}, \quad X_{\CC} = \begin{pmatrix} 0 & {\bm c} \\ -{}^{t}{\bm c} & \bX_{\CC} \end{pmatrix}.
\]
Then $G^{\prime}=X_{\CC} G - G X_{\BB}$ is equivalent to ${}^{t}\bm{c}=\bG{}^{t}\bm{b}$ and $\bG^{\prime}=\bX_{\CC}\bG$. By $\bG^{\prime}=\bX_{\CC}\bG$, we have 
\[
\bG = \begin{pmatrix}
\cos\theta&0&\sin\theta\\
0&1&0\\
-\sin\theta&0&\cos\theta\\
\end{pmatrix} \overline{G}_0
\]
for a function $\theta$ and a constant matrix $\overline{G}_0 \in O(3)$. Take $\bm{\tilde{b}}$ so that  
${}^{t}\bm{\tilde{b}}= \overline{G}_0 {}^{t}\bm{b}$ and $\tilde{b}_i$ be the $i$-th component of $\bm{\tilde{b}}$ $(i=1,2,3)$.
Then, by ${}^{t}\bm{c}=\bG{}^{t}\bm{b}$, we have 
\begin{equation}\label{eq:cbt}
\left(\begin{matrix}c_{1}\\
c_{2}\\
0
\end{matrix}
\right)
=\left(
\begin{matrix}
\cos\theta&0&\sin\theta\\
0&1&0\\
-\sin\theta&0&\cos\theta\\
\end{matrix}
\right)\left(\begin{matrix}\tilde{b}_{1}\\
\tilde{b}_{2}\\
\tilde{b}_{3}
\end{matrix}
\right).\end{equation}

Let $e_1={}^{t}(1, 0, 0)$, $e_{2}={}^{t}(0, 1, 0)$, $e_3={}^{t}(0, 0, 1)$. Then, ${}^{t}\bm{b}$ is tangent to $e_1$ for $s<0$, tangent to $e_2$ for $0< s < 1$ and tangent to $e_3$ for $s>1$.

First let us show a contradiction assuming that ${}^{t}\bm{\tilde{b}}$ is nowhere tangent to $e_2$. 
In this case, since $(\tilde{b}_1, \tilde{b}_3)$ is nowhere vanishing, we have $(\cos \theta, \sin \theta) = \frac{1}{\sqrt{\widetilde{b}_{1}^{2}+\widetilde{b}_{3}^{2}}} (\widetilde{b}_{1}, \widetilde{b}_{3})$. Since $\bm{b}$ is tangent to $e_1$ for $s < 0$, it is clear that $\theta$ is constant for $s < 0$. Similarly we have that $\theta$ is constant also for $0 < s< 1$ and for $1 < s$. Therefore $G$ is constant everywhere. By \eqref{eq:cbt}, it follows that $\overline{G} {}^{t}\bm{b}$ is contained in the subspace $\RR^{2} \oplus 0 \subset \RR^{3}$
for a constant orthogonal matrix $G$. But this is not possible, since the image of ${}^{t}\bm{b}$ is clearly not contained in any $2$-dimensional subspace of $\RR^{3}$. Thus we have a contradiction.

Now let us consider the case where $\overline{G}_0 e_1$ is tangent to $e_2$. In this case, ${}^{t}\bm{\tilde{b}}$ is tangent to $e_2$ for $s<0$ and $s>2$, and not tangent to $e_2$ for $0<s<1$ and $1<s<2$.
It follows that $(\tilde{b}_1, \tilde{b}_3)$ is nowhere vanishing for $0<s<1$ and $1<s<2$. As in the last paragraph, it follows that $\theta$ is constant for $0<s<2$.
Since $\overline{G}_0 e_1$ is tangent to $e_2$, $\bG_{0}$ is of the form $\begin{pmatrix}0&g_{23}&g_{24}\\ \pm 1&0&0\\0&g_{43}&g_{44}\end{pmatrix}$. Then ${}^{t}\bm{\tilde{b}}=e^{-\frac{1}{s(1-s)}}\begin{pmatrix} g_{23} \\
0\\
g_{43}
\end{pmatrix}$ for $0<s<1$ and ${}^{t}\bm{\tilde{b}}=e^{-\frac{1}{(s-1)(-s+2)}}\begin{pmatrix} g_{24} \\
0\\
g_{44}
\end{pmatrix}$ for $1<s<2$. By \eqref{eq:cbt}, it follows that $c_2$ is identically zero for $0 < s < 2$. Then the image of $\overline{G} {}^{t}\bm{b}$ for $0<s<2$ is contained in the subspace $\RR \oplus 0 \oplus 0 \subset \RR^{3}$.
On the other hand, ${}^t\bm{b}$ is tangent to $e_2$ for $0<s<1$ and ${}^t\bm{b}$ is tangent to $e_3$ for $1<s<2$. Therefore it is a contradiction.


We omit the detail of the proof of the remaining two cases; the case where $\bG_{0} e_2$ is tangent to $e_2$, and the second case where  $\bG_{0} e_{3}$ is tangent to $e_{2}$. In both cases, we can show a contradiction by an argument similar to the last two cases.
Therefore, we conclude that $\gamma$ does not admit a generalized Bishop frame of type C.

\end{proof}

Note that a curve with a Bishop frame with the following curvature, which is simpler than the last example, admits a frame of type C:
\begin{align*}
b_{1}(s) & =\begin{cases}
e^{\frac{1}{s}} & s<0 \\
0 & \text{else},
\end{cases} \\
b_{2}(s) & =\begin{cases}
e^{-\frac{1}{s(-s+1)}} & 0<s<1 \\
0 & \text{else},
\end{cases} \\
b_{3}(s) & =\begin{cases}
e^{-\frac{1}{(s-1)(-s+2)}} & 1<s<2 \\
0 & \text{else}.
\end{cases}
\end{align*}
Indeed, we can take a transformation $G$ from the Bishop frame to a frame of type C so that $\overline{G}_0 e_2$ is tangent to $e_2$.





\end{document}